\newcommand{\led}{\langle}
\newcommand{\red}{\rangle}
\newcommand{\Ups}{\Upsilon}
\newcommand{\rate}{\sigma}
\newcommand{\speed}{\mathscr V}
\newcommand{\LL}{\mathscr L}
\newcommand{\hJ}{\hat J}
\newcommand{\hY}{\hat Y}
\newcommand{\diff}{\Delta}
\newcommand{\dd}{\,\text d}
\newcommand{\tb}{\tilde \beta}
\newcommand{\ta}{a^*}
\newcommand{\sa}{\hat a}
\newcommand{\tr}{r^*}
\newcommand{\sr}{\hat r}
\newcommand{\tR}{R^*}
\newcommand{\sR}{\hat R}
\newcommand{\rf}[1]{\overline{#1}}
\newcommand{\ff}[1]{\underline{#1}}
\newcommand{\ett}{\mathscr A}
\newcommand{\bigo}{\mathcal O}
\newcommand{\tva}{\mathscr B}
\newtheorem{claim}{Claim}
\newtheorem{rem}{Remark}
\title[First-passage percolation on ladder-like graphs]
{First-passage percolation on ladder-like graphs with 
heterogeneous exponential times.}
\date{\today{}}
\author{Henrik Renlund}
\address{Department of Mathematics, Uppsala University, PO Box 480,
S-751 06 Uppsala, Sweden}
\email{henrik.renlund@math.uu.se}
\urladdr{http://www2.math.uu.se/$\sim$renlund/}
\begin{document}

\begin{abstract}
We determine the asymptotic speed of the first-passage percolation process on some ladder-like graphs 
(or width-2 stretches) when the times associated with different edges are independent and
exponentially distributed but not necessarily all with the same mean. The method uses a
particular Markov chain associated with the first-passage percolation process and 
properties of its stationary distribution. 
\end{abstract}

\maketitle

\section{Introduction}
Consider a graph $G$ with vertex set $V$ and (undirected) edge set $E\subset V\times V$. If there
is an edge $e=\led v,v' \red$ joining vertices $v$ and $v'$, they are said to be adjacent. 
A path $\pi(v,v')$ between $v$ and $v'$ is an alternating sequence of vertexes 
and edges $(v_0,e_1,v_1,\ldots,e_n,v_n)$ such that $e_i=\led v_{i-1},v_i \red$ for $i=1,\ldots,n$, 
$v_0=v$ and $v_n=v'$.

We think of each edge $e$ as being associated with a (typically non-negative) random
time $\xi_e$, formally we define
 $\xi=\{\xi_e,e\in E\}$ to be a stochastic process indexed by the edges of the graph.
We define the time $T\pi(v,v')$ of
a path $\pi(v,v')$ to be 
\[ T\pi(v,v')=\sum_{e\in \pi(v,v')} \xi_e\quad\mbox{and}\quad
 T(v,v')=\inf_{\pi(v,v')} T\pi(v,v') \]
to be the shortest time of any path between $v$ and $v'$. $T(v,v')$
is called the first passage time from $v$ to $v'$ and
is the subject of investigation in first passage percolation,
see e.g.\ \cite{SW78}. 

We may also consider first passage times between sets of vertices. If $V_1$ and
$V_2$ are such sets, then 
\[ T(V_1,V_2)=\inf_{v\in V_1, v'\in V_2} T(v,v') \]
is the first passage time from $V_1$  to $V_2$. 

Usually, all first passage times under consideration are from some fixed and given
set $V_0$ to a varying set $V'$, in which case we denote by
$T(V')$ the first passage time from $V_0$ to $V'$, or 
$T(v')$ if $V'$ is the singleton set $\{v'\}$, and refer to this simple as
the first passage time of $V'$.  

We can think of this as a model for a contagious disease. At time
zero a subset $V_0$ of the vertices are infected and subsequently the disease
spreads to adjacent vertices. The time for the infection to be submitted from
$v$ to $v'$ along the edge $\langle v,v'\rangle$ is $\xi_{\led v,v' \red}$.

A typical question, and indeed the one we shall be concerned with, is how \emph{fast}
does the infection spread? This of course requires some metric on the set of vertices. We 
shall look at cases of \emph{ladder-like} graphs with ``height'' as the measure of 
distance and independent edge times, each having  an exponential distribution. 

\vspace{0.2cm}
\noindent\textbf{Acknowledgement.} The author wishes to thank Robert Parviainen for
encouragement and help that borders on collaboration, as well as professor Svante Janson for
interesting discussions on the mathematical technicalities of this manuscript.

\section{First passage percolation on ladder-like graphs} \label{section2}
A ladder-like graph, or width-2 stretch, is a graph whose vertex set is 
$V=\mathbb N\times\{0,1\}$, $\mathbb N$ denoting $\{0,1,2,\ldots\}$, 
with the ordinary Euclidean metric, considered when each vertex is placed in
the (real) plane $\mathbb R^2$.
Our interest is on a particular class $\LL$ of such graphs where vertices \emph{may} be adjacent if
they are no more than Euclidean distance $\sqrt 2$ away from each other and, in addition,
the graph is translational invariant, see Figure \ref{LL}. By translational invariant we mean
that we can superimpose the graph on itself by shifting it one unit length to the right in Figure \ref{LL}.

\begin{figure}[hbt]
\begin{center}
\psfrag{x}{0}
\psfrag{y}{1} 
\psfrag{z}{2}
\psfrag{w}{3}
\includegraphics[scale=0.5]{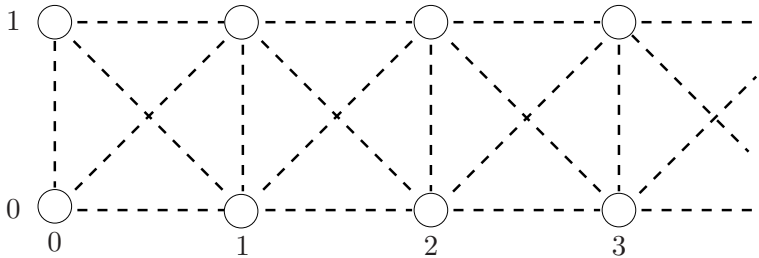}
\caption{Part of the vertices of a ladder-like graph. 
The possible edges of a graph in $\LL$ are dashed. Note that it must also
be translational invariant.}
\label{LL}
\end{center}
\end{figure}

A vertex $(x,y)$ in such a graph is said to be at height $x\in \mathbb N$
and level $y\in\{0,1\}$. We will consider $V_0=\{(0,0),(0,1)\}$ to be infected at 
time zero, and we  consider first passage times of the height $n$, i.e.\ the set 
$V_n=\{(n,0),(n,1)\}$.

It should be noted that the Euclidean metric only serves as to define the class
of graphs we are interested in. Once this is done, we only consider ``height'' 
as a measure of length in the graph. In this sense, the ``length'' of the (possible)
diagonal edges are not longer that that of the (possible) horizontal edges.

Typically the focus is on the a.s.\ limit 
\[ \rate=\rate(G,\xi)=\lim_{n\to \infty}\frac{T(V_n)}{n}, \]
called the time constant. Now, the first passage times on these
graphs are subadditive and the almost sure convergence of $T(V_n)/n$ 
relies on the subadditiv ergodic theorem. See Proposition 4.1 of \cite{Sch09}
for the details. In our calculations it is rather the asymptotic speed $\speed$ of percolation
that appears naturally, but this is just the inverse of the time constant, $\speed=1/\rate$.

The investigation of the rate, or speed, of percolation on ladder-like graphs began with 
\cite{FGS06}, which consider \emph{the ladder}, i.e. where all vertices at distance
1 are connected by an edge.
 That paper gives a method of calculating $\rate$ when $\xi_e$:s are independent
with the same discrete distribution, as well as a method for getting arbitrarily 
good bounds for the same quantity when the distribution is continuous (and well behaved).

A few years later \cite{Sch09} and \cite{Ren10} independently studied the case of 
having independent and identical exponential times. In \cite{Sch09}, six graphs in
$\LL$ are considered and explicit expressions for $\rate$ are found for three of them, using
recursive distributional equations. It should be noted that these 
six graphs are in fact ``all'' graphs in the class $\LL$ as far as $\rate$
is concerned, in the sense that
any other graph is either trivial 
or it has the same $\rate$ as one of these six.  

Another method for finding $\rate$ is employed in \cite{Ren10}, which studies the ladder. 
There a Markov chain is found and the speed of percolation can be calculated 
``at stationarity'', a method that will be repeated below in a more
general context. The method also
allows the study of another aspect of the percolation process called
the \emph{residual times}, defined in \cite{Ren10}.

Now, we fix some notation that will be used throughout this paper.
Let
\begin{equation}\label{N} N_t=\sup_{i\in\{0,1\}}\{x: T[(x,i)]\leq t\} \end{equation}
denote -- in the interpretation of the model of a contagious disease -- the height of the infection
at time $t$.

Let 
\begin{equation}\label{M} M_t=\sup\{ x: \max\{T[(x,0)], T[(x,1)]\}\leq t \} \end{equation}
denote the largest height at which both levels have an infected vertex. 

Now  define the \emph{front process} $F_t$ as 
\begin{equation}\label{F} F_t=N_t-M_t. \end{equation} 
In some situations $F_t$ is a Markov chain on $\mathbb N$. As we shall see, the problem
of determining the percolation speed $\speed$ is more or less equivalent to finding
the stationary distribution of this Markov chain.  

We will denote by $\varrho_n \sim \vartheta_n$ the property that
 \[ \displaystyle \lim_{n\to\infty}\frac{\varrho_n}{\vartheta_n}=1 \]
for real sequences $\varrho_n$ and $\vartheta_n$.
Let $z^{\ff n}$ and $z^{\rf n}$ denote falling and rising factorials, respectively, 
which may be expressed as
\begin{align*}
 z^{\ff n} &=z(z-1)\cdots(z-n+1)=\frac{\Gamma(z+1)}{\Gamma(z+1-n)},\;\mbox{and} \\ 
z^{\rf n} &=z(z+1)\cdots (z+n-1)=\frac{\Gamma(z+n)}{\Gamma(z)}.
\end{align*}

\section{The ladder with heterogeneous exponential times}
In this section we generalize the result of \cite{Ren10} to a ladder with
heterogeneous exponential times. Let each horizontal edge be associated with an
exponential time with intensity $\lambda_{\mathrm h}>0$  (i.e.\ an exponential with expectation
$1/\lambda_{\mathrm h}$) and each vertical edge be associated with an 
exponential time with intensity $\lambda_{\mathrm v}>0$. Random variables associated to
different edges are independent. Notice, since we may equally
well measure intensity in units of $\lambda_{\mathrm h}$, there is no loss of generality in saying 
that $\lambda_{\mathrm h}=1$. Let us do so and denote $\lambda_{\mathrm v}$ simply by $\lambda$. 

Note that \cite{Ren10} is concerned with the case $\lambda=1$. 

\begin{figure}[hbt]
\begin{center}
\psfrag{x}{0}
\psfrag{y}{1} 
\psfrag{z}{2}
\psfrag{w}{3}
\psfrag{a}{4}
\psfrag{b}{5}
\psfrag{c}{6}
\psfrag{d}{7}
\includegraphics[scale=0.3]{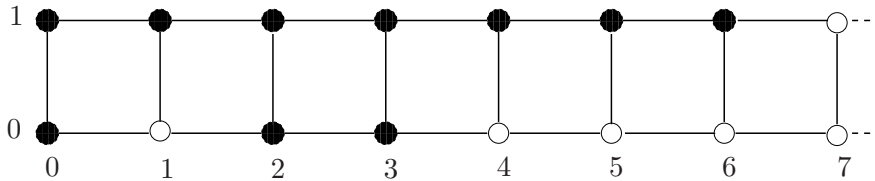}
\caption{Infected nodes at time $t$ marked as black. Here $M_t=3, N_t=6$ and $F_t=3$. 
Not shown in the picture is that each vertical edge has intensity $\lambda$ and each horizontal edge
has intensity $1$.}
\label{front_ex}
\end{center}
\end{figure}

Now, $F_t$, defined by (\ref{N}), (\ref{M}) and (\ref{F}), is a continuous time Markov chain on
$\mathbb N$. This fact becomes clear as we write down the intensity matrix $Q$ of the process.
Denote by $t'=\inf\{t>t_0: F_t\neq F_{t_0}\}$ the first time, after $t_0$, that the $F$-process 
 changes its value.

Consider first the case of $F_{t_0}=0$, which means that $M_{t_0}$ and $N_{t_0}$ both equal, say, $n$. 
Then both $(n,0)$ and $(n,1)$ are infected, and there is no infected node at any greater height. 
The only states that can be infected at $t'$
are $(n+1,0)$ or $(n+1,1)$, both resulting in $F_{t'}=1$, and both as a result of the infection spreading
along a horizontal edge. Informally, we will say that these edges \emph{lead to} the state $F_{t'}=1$, 
which is rather inaccurate since edges connect different vertices and not different states of the front process,
but a convenient terminology. So, thus far we have established that  the intensity from state 0 to state 1 is 2 
and the intensity away from state 0 is -2.  

If $F_{t_0}=3$, as in Figure \ref{front_ex},  there are 3 vertical edges leading to  
states 2, 1 and 0 respectively. There are 2 horizontal edges leading to states 2 and 4,
respectively. Thus, the intensities to 0 and 1 are both $\lambda$, to 2 it is $1+\lambda$ and to 4 the intensity is 1.

In a similar manner, one derives the following form of the intensity matrix:
\begin{equation} \label{Q}
Q=\left( \begin{array}{rrrrrrr}
-2 & 2 & 0 & 0 & 0  \\
 1+\lambda & -2-\lambda & 1 & 0 & 0 \\
 \lambda & 1+\lambda & -2-2\lambda & 1 &0  &\ldots \\
 \lambda & \lambda & 1+\lambda & -2-3\lambda & 1 \\
 \lambda & \lambda & \lambda & 1+\lambda & -2-4\lambda \\
 \lambda & \lambda & \lambda & \lambda & 1+\lambda &   \\
  &  &\vdots  &&  &  \ddots
\end{array} \right). \end{equation}
This Markov chain is irreducible, so if we can find a stationary distribution 
$\Pi=(\pi_0,\pi_1,\ldots)$, on $\mathbb N$, such that $\Pi Q=0$, then this is the unique stationary distribution
of $F_t$, and as $t\to\infty$, $F_t$ will converge to it. 

Now, our main question is: what is the asymptotic speed of percolation?
  When $F_{t_0}=0$ the intensity by which $N_t$ increases
is 2, since there are  
two horizontal edges from the
state $F_{t_0}=0$ to the state $N_{t'}=N_{t_0}+1$ (and no other edge
 makes any difference to either of these processes). 
When $F_{t_0}>0$, there is one horizontal
edge leading to $N_{t'}=N_{t_0}+1$ (and no other edge that makes any difference
to the $N_t$-process). 
Hence, at stationarity we get the speed of percolation as
\begin{equation}\label{rop} \speed=2\pi_0+1(1-\pi_0)=1+\pi_0. \end{equation}

Next, we proceed to find $\Pi$ through the equation $\Pi Q=0$. One way of doing this is
to express each $\pi_n$, $n>0$, in terms of $\pi_0$; $\pi_n=a_n\pi_0-b_n$.

Now, the first entry of $0=\Pi Q$ 
is
\begin{equation*}
 0=-2\pi_0+\pi_1+\lambda\sum_{j\geq 1} \pi_j.
\end{equation*}
 As $\sum_{j\geq 0}\pi_j=1$, which implies $\sum_{j\geq 1}\pi_j=1-\pi_0$, we get 
\begin{equation}\label{1}
 0=-(2+\lambda)\pi_0+\lambda+\pi_1\quad\mbox{yielding}\quad\pi_1=(2+\lambda)\pi_0-\lambda.
\end{equation}
Repeating this procedure for the equations corresponding to columns 2 and 3 of $Q$, yields, after 
isolating $\pi_2$ and $\pi_3$,
\begin{align}
 \pi_2 &= (2\lambda^2+7\lambda+2)\pi_0-(2\lambda^2+3\lambda),\quad\mbox{and} \label{2} \\
 \pi_3 &= (6\lambda^3+26\lambda^2+22\lambda+2)\pi_0-(6\lambda^3+14\lambda^2+6\lambda). \label{3}
\end{align}
Next, taking the difference of equations corresponding to columns $k$ and $k+1$
for any $k\geq 2$ yields 
\[ 0=\pi_{k-1}-(\lambda k+3)\pi_{k}+(\lambda (k+2)+3)\pi_{k+1}-\pi_{k+2}, \]
which we shall rewrite as
\begin{equation}\label{n}
 \pi_n= (\lambda n+3)\pi_{n-1}-(\lambda (n-2)+3)\pi_{n-2}+\pi_{n-3}, \; n\geq 4.
\end{equation}
We formally define $a_n$ and $b_n$ by the relation $\pi_n=a_n\pi_0-b_n$, for
$n\geq 1$. Thus, for $n=1,2,3$ they are defined by equations (\ref{1}), (\ref{2})
and (\ref{3}), respectively, and for $n\geq4$ can be found iteratively through
(\ref{n}), with $a_n$ or $b_n$ in place of $\pi_n$. 

As necessarily $\pi_n\to 0$ as $n\to\infty$, we can find $\pi_0$ as
\[ \pi_0=\lim_{n\to\infty} \frac{b_n}{a_n}. \]

This far we have copied the procedure of \cite{Ren10}, which proceeds to 
prove the link to Bessel functions, along the lines of how this fact was
originally discovered. Here, we start instead with the Bessel functions.

\subsection{The Bessel functions}
The Bessel functions of the first and second kind, $J_\nu(z)$ and
$Y_\nu(z)$, as described in e.g.\ \cite{Wat22},
both satisfy the recursion
\begin{equation}\label{Brec}
 C_{\nu+1}(z)+C_{\nu-1}(z)=\frac{2\nu}{z}\cdot C_\nu(z).
\end{equation}
Define, for any (real numbers) $n$, $A$ and $B\neq 0$, 
\[
\hJ_n=\hJ_n(A,B)=J_{n+A/B}(2/B) \quad\mbox{and}\quad \hY_n=\hY_n(A,B)=Y_{n+A/B}(2/B).
\]
Then $\hJ_n$ and $\hY_n$ both satisfy 
\begin{equation}\label{Hrec}
 \hat C_{n+1}+\hat C_{n-1}=(A+Bn)\hat C_n.
\end{equation}
Define the function
\begin{equation}\label{Upsdef}
 \Ups_n=\Ups(n,m,A,B)=\pi[\hJ_{n}\hY_m-\hJ_m\hY_{n}].
\end{equation}
This function inherits recursion (\ref{Hrec}) (in
parameter $n$) 
\begin{equation} \label{Ups}
\Ups_{n+1}+\Ups_{n-1}=(A+Bn)\Ups_n. 
\end{equation}
Next, define 
\[ \diff_n=\diff(n,m)=\diff(n,m,A,B)=\Ups_n-\Ups_{n-1}. \]

When $m$ and/or $A$ and $B$ are clear from the context, or unimportant, we write 
$\Delta_n$ or $\Delta(n,m)$ instead of the lengthy $\Delta(n,m,A,B)$. 
As the next claim makes clear, this function is designed with (\ref{n}) in mind.
\begin{claim}
$\Delta_n$  satisfies 
\begin{equation}\label{delta}
 \diff_n=[1+A+B(n-1)]\diff_{n-1}-[1+A+B(n-3)]\diff_{n-2}+\diff_{n-3}.
\end{equation}
\end{claim}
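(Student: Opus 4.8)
The plan is to reduce the whole identity to the single recursion (\ref{Ups}) obeyed by $\Ups_n$ together with the defining relation $\diff_k=\Ups_k-\Ups_{k-1}$; no property of the Bessel functions beyond (\ref{Ups}) is needed. It is convenient to abbreviate the coefficient in (\ref{Ups}) by writing $c_k=A+Bk$, so that (\ref{Ups}) reads $\Ups_{k+1}+\Ups_{k-1}=c_k\Ups_k$ and the two bracketed factors in (\ref{delta}) are exactly $1+A+B(n-1)=1+c_{n-1}$ and $1+A+B(n-3)=1+c_{n-3}$. The statement is then a verification: I would substitute $\diff_{n-1},\diff_{n-2},\diff_{n-3}$ by their definitions into the right-hand side of (\ref{delta}) and check that what remains equals $\Ups_n-\Ups_{n-1}$.

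Concretely, I would split the right-hand side of (\ref{delta}) into the part carried by the constant $1$ and the part carried by the $c_k$'s. The first part telescopes to
\[ \diff_{n-1}-\diff_{n-2}+\diff_{n-3}=\Ups_{n-1}-2\Ups_{n-2}+2\Ups_{n-3}-\Ups_{n-4}, \]
while the second part collects into
\[ c_{n-1}\diff_{n-1}-c_{n-3}\diff_{n-2}=c_{n-1}\Ups_{n-1}-(c_{n-1}+c_{n-3})\Ups_{n-2}+c_{n-3}\Ups_{n-3}. \]

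The key step, and the only one that is not pure bookkeeping, is to handle the middle term $(c_{n-1}+c_{n-3})\Ups_{n-2}$, whose coefficient does not match the index $n-2$ of $\Ups$ so that (\ref{Ups}) cannot be applied to it directly. Since $c_k$ is affine in $k$, however, one has $c_{n-1}+c_{n-3}=2c_{n-2}$, so this term becomes $2c_{n-2}\Ups_{n-2}$ and the second part is now a combination of the three products $c_{n-1}\Ups_{n-1}$, $c_{n-2}\Ups_{n-2}$, $c_{n-3}\Ups_{n-3}$, to each of which (\ref{Ups}) applies, giving
\[ \Ups_n-2\Ups_{n-1}+2\Ups_{n-2}-2\Ups_{n-3}+\Ups_{n-4}. \]
Adding the two parts, every term of index at most $n-2$ cancels and one is left with $\Ups_n-\Ups_{n-1}=\diff_n$, which is the claim. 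I expect the main (and mild) obstacle to be precisely this regrouping of the $\Ups_{n-2}$ coefficients via the affinity of $c_k$; once it is seen the recursion (\ref{Ups}) does the rest, and the order-three structure of (\ref{delta}) is just the usual one-step increase in order caused by differencing a second-order recurrence with non-constant coefficients.
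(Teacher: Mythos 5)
Your proof is correct and is essentially the paper's own argument: both substitute $\diff_k=\Ups_k-\Ups_{k-1}$ into the right-hand side, regroup the $\Ups_{n-2}$ coefficient via $[A+B(n-1)]+[A+B(n-3)]=2[A+B(n-2)]$ (which the paper does silently in its display), and apply (\ref{Ups}) three times to collapse everything to $\Ups_n-\Ups_{n-1}$. All the intermediate expressions you write out check against the paper's computation.
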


\begin{proof}
By using the definition $\diff_k=\Ups_k-\Ups_{k-1}$ on the right hand side
of (\ref{delta}), rearranging terms so that relation (\ref{Ups}) can be applied (as indicated
in (\ref{u-b})),  we see that the right hand side of (\ref{delta}) equals
\begin{align}
&\underbrace{[A+B(n-1)]\Ups_{n-1}}_{\Ups_{n}+\Ups_{n-2}}
\underbrace{-2[A+B(n-2)]\Ups_{n-2}}_{-2\Ups_{n-1}-2\Ups_{n-3}}
  \underbrace{+[A+B(n-3)]\Ups_{n-3}}_{+\Ups_{n-2}+\Ups_{n-4}} + \label{u-b} \\
&\quad\quad\quad  +\Ups_{n-1}-2\Ups_{n-2}+2\Ups_{n-3}-\Ups_{n-4}  \nonumber \\
&=\Ups_{n}-\Ups_{n-1},
\end{align}
which is the same as the left-hand side of (\ref{delta}).
\end{proof}
\begin{claim}
 \begin{align} 
\Ups(n,n,A,B) &=0\quad\mbox{and} \\
\Ups(n+1,n,A,B) &=B.
 \end{align}
\end{claim}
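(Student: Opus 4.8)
The first identity I would dispose of in one line: setting $m=n$ in the definition (\ref{Upsdef}) gives $\Ups(n,n,A,B)=\pi[\hJ_n\hY_n-\hJ_n\hY_n]=0$, since the bracket is antisymmetric under interchange of the two indices. No machinery is needed here.

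For the second identity the plan is to reduce $\Ups(n+1,n,A,B)$ to a cross-product of genuine Bessel functions and evaluate it via the Bessel Wronskian. Writing $\nu=n+A/B$ and $z=2/B$, the definitions of $\hJ$ and $\hY$ give $\Ups(n+1,n,A,B)=\pi[J_{\nu+1}(z)Y_\nu(z)-J_\nu(z)Y_{\nu+1}(z)]$. Everything then comes down to the classical cross-product (Lommel) formula $J_{\nu+1}(z)Y_\nu(z)-J_\nu(z)Y_{\nu+1}(z)=2/(\pi z)$; multiplying by $\pi$ and substituting $z=2/B$ yields exactly $B$.

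To keep the argument self-contained I would derive this rather than merely quote it, and I would split it into constancy plus normalization. First I would show that $g(n):=\hJ_{n+1}\hY_n-\hJ_n\hY_{n+1}$ is constant in $n$: forming $g(n)-g(n-1)$ and grouping the $\hJ$ and $\hY$ terms yields $\hY_n(\hJ_{n+1}+\hJ_{n-1})-\hJ_n(\hY_{n+1}+\hY_{n-1})$, and applying the recursion (\ref{Hrec}) to each of $\hJ$ and $\hY$ collapses this to $(A+Bn)(\hJ_n\hY_n-\hJ_n\hY_n)=0$. Thus $\Ups(n+1,n,A,B)$ does not depend on $n$, which already explains why the answer is a bare constant. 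To pin down its value I would use the derivative recurrence $C_{\nu+1}=(\nu/z)C_\nu-C_\nu'$, valid for both $J$ and $Y$, which turns $J_{\nu+1}(z)Y_\nu(z)-J_\nu(z)Y_{\nu+1}(z)$ into the standard Wronskian $J_\nu(z)Y_\nu'(z)-J_\nu'(z)Y_\nu(z)$.

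The only genuine input from outside the recursion (\ref{Hrec}) is the normalization of that Wronskian, $W\{J_\nu,Y_\nu\}(z)=2/(\pi z)$; everything else is bookkeeping. Hence the single point requiring care, the \emph{main obstacle}, is supplying this normalizing constant, since the recursion by itself determines $g$ only up to a multiplicative constant. I would fix it either by quoting the Wronskian from \cite{Wat22} or from the small-$z$ behaviour of $J_\nu$ and $Y_\nu$.
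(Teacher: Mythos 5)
Your proposal is correct and follows essentially the same route as the paper: the first identity by antisymmetry of the definition, and the second by rewriting $\Ups(n+1,n,A,B)$ as $\pi[J_{\nu+1}(z)Y_\nu(z)-J_\nu(z)Y_{\nu+1}(z)]$ with $z=2/B$ and reducing this to the Wronskian $J_\nu(z)Y_\nu'(z)-J_\nu'(z)Y_\nu(z)=2/(\pi z)$ via the derivative recurrence $zC_\nu'(z)=\nu C_\nu(z)-zC_{\nu+1}(z)$, exactly the two Watson identities the paper cites. Your additional observation that the recursion (\ref{Hrec}) forces $\hJ_{n+1}\hY_n-\hJ_n\hY_{n+1}$ to be constant in $n$ is a correct but unneeded embellishment.
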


\begin{proof}
The first assertion follows trivially from the definition (\ref{Upsdef}).

For the second assertion, we need two important relations for the Bessel functions. The first is that
\begin{equation}\label{fir1}
 zJ_\nu'(z)=\nu J_\nu(z)-zJ_{\nu+1}(z),\quad \forall \nu.
\end{equation}
and the same holds with $Y_\nu$ replacing $J_\nu$, see
 3.2(4), p.\ 45 and 3.56(4), p.\ 66 of \cite{Wat22}, resp.
The second is a fact relating to the Wronskian, namely that
\begin{equation}\label{sec2}
 \frac2{\pi z}=J_\nu(z)Y'_\nu(z)-J_\nu'(z)Y_\nu(z), \quad \forall \nu, z\neq 0,
\end{equation}
see 3.63(1), p.\ 76 of \cite{Wat22}.

(\ref{fir1}) inserted in (\ref{sec2}) gives
\begin{equation*}
 \frac 2z=\pi[J_{\nu+1}(z)Y_\nu(z)-J_\nu(z)Y_{\nu+1}(z)]
\end{equation*}
and $z=2/B$ gives the desired result.
\end{proof}

\begin{rem}
 It should be noted that the $\Ups$-function does not solve a recursion
of the form (\ref{delta}) with \emph{arbitrary} initial conditions. Indeed,
by relation (\ref{Ups}) we get 
\[ \Delta_{n+1}-\Delta_n=(A+Bn-2)\Ups(n,m,A,B). \]
 Hence, with $A$ and $B$ considered fixed, two 
points of the sequence of $\Delta$:s (such that $A+Bn-2\neq 0$) is enough
 to determine $m$ - which may not be unique, but in general
not arbitrary - which in turn determines the entire sequence. 
\end{rem}

From the preceding claim, and relation (\ref{Ups}), it is straightforward to calculate:
\begin{claim} \label{diffval}
\begin{align*}
\diff(m,m,A,B) &=B, \\
 \diff(m+1,m,A,B) &=B, \quad\mbox{and}\\
\diff(m+2,m,A,B) &=[A+B(m+1)-1]B.
\end{align*}
\end{claim}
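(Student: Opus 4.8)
The plan is to compute all three values directly from the definition $\diff(n,m,A,B)=\Ups_n-\Ups_{n-1}$, using the two facts just established in the previous claim, namely $\Ups(m,m)=0$ and $\Ups(m+1,m)=B$, together with the recursion (\ref{Ups}) to generate one further value of $\Ups$ when needed. Throughout I keep $m$, $A$, $B$ fixed and suppress them, writing $\Ups_k$ for $\Ups(k,m,A,B)$.

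\medskip

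First, for $\diff(m,m)=\Ups_m-\Ups_{m-1}$ I need $\Ups_{m-1}=\Ups(m-1,m)$. This is not one of the two given values, so I would obtain it from the recursion (\ref{Ups}) applied at $n=m$, which reads $\Ups_{m+1}+\Ups_{m-1}=(A+Bm)\Ups_m$. Since $\Ups_m=\Ups(m,m)=0$, this collapses to $\Ups_{m-1}=-\Ups_{m+1}$. Now $\Ups_{m+1}=\Ups(m+1,m)=B$, so $\Ups_{m-1}=-B$, and therefore $\diff(m,m)=\Ups_m-\Ups_{m-1}=0-(-B)=B$, as claimed. Second, $\diff(m+1,m)=\Ups_{m+1}-\Ups_m=B-0=B$, which is immediate from the two given values and requires no recursion.

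\medskip

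Third, for $\diff(m+2,m)=\Ups_{m+2}-\Ups_{m+1}$ I need $\Ups_{m+2}=\Ups(m+2,m)$. Applying the recursion (\ref{Ups}) at $n=m+1$ gives $\Ups_{m+2}+\Ups_m=(A+B(m+1))\Ups_{m+1}$; since $\Ups_m=0$ and $\Ups_{m+1}=B$, this yields $\Ups_{m+2}=(A+B(m+1))B$. Subtracting $\Ups_{m+1}=B$ then gives $\diff(m+2,m)=(A+B(m+1))B-B=[A+B(m+1)-1]B$, matching the stated expression.

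\medskip

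I expect no real obstacle here, since every value follows by a single application of (\ref{Ups}) evaluated at the special point where $\Ups(m,m)=0$ kills one term and the known value $\Ups(m+1,m)=B$ supplies the other. The only point requiring a moment's care is the first value, where one must notice that evaluating (\ref{Ups}) at $n=m$ reaches \emph{below} the given data to produce $\Ups_{m-1}$; the vanishing of $\Ups_m$ is precisely what makes this reflection $\Ups_{m-1}=-\Ups_{m+1}$ clean. Everything else is direct substitution.
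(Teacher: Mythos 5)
Your computation is correct and follows exactly the route the paper indicates (the paper gives no written proof, saying only that the claim follows "from the preceding claim, and relation (\ref{Ups})"): you use $\Ups(m,m)=0$ and $\Ups(m+1,m)=B$ together with one application of (\ref{Ups}) at $n=m$ to get $\Ups_{m-1}=-B$ and at $n=m+1$ to get $\Ups_{m+2}=(A+B(m+1))B$, from which all three values of $\diff$ follow. Nothing is missing.
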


Next, we need some asymptotic properties of $\Ups_n$.  

\begin{claim} \label{c:upsasympt}As $n\to\infty$,
 \begin{equation}\label{upsasympt}
  \Ups(n,m,A,B)\sim \hat J_m\Gamma(n+A/B)B^{n+A/B}.
 \end{equation}
\end{claim}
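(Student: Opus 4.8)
The plan is to read the asymptotics straight off the large-order behaviour of the Bessel functions, since by definition $\Ups(n,m,A,B)=\pi[\hJ_n\hY_m-\hJ_m\hY_n]$ is an explicit combination of them in which only the order depends on $n$. For fixed argument $z$ and order $\nu\to\infty$ the leading term of the power series for $J_\nu$ gives
\[ J_\nu(z)\sim \frac{1}{\Gamma(\nu+1)}\Big(\frac z2\Big)^{\nu}, \]
while the companion function satisfies
\[ Y_\nu(z)\sim -\frac{\Gamma(\nu)}{\pi}\Big(\frac z2\Big)^{-\nu}; \]
both are standard (see \cite{Wat22}). Specialising to $z=2/B$ and $\nu=n+A/B$, so that $(z/2)^{\pm\nu}=B^{\mp\nu}$, these read
\[ \hJ_n\sim \frac{B^{-(n+A/B)}}{\Gamma(n+A/B+1)}\qquad\text{and}\qquad \hY_n\sim -\frac{\Gamma(n+A/B)}{\pi}\,B^{\,n+A/B}. \]

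First I would isolate the term $-\pi\hJ_m\hY_n$ in the definition of $\Ups_n$. Here $\hJ_m$ is a constant independent of $n$, and inserting the asymptotic for $\hY_n$ above yields exactly
\[ -\pi\hJ_m\hY_n\sim \hJ_m\,\Gamma(n+A/B)\,B^{\,n+A/B}, \]
which is precisely the claimed right-hand side of (\ref{upsasympt}). It then remains only to verify that the remaining term $\pi\hJ_n\hY_m$ is negligible against this.

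The decisive point — and the only place where care is required — is that $\hJ_n\to 0$ super-exponentially while the target grows like $\Gamma(n+A/B)\,B^{\,n+A/B}$. Dividing $\pi\hJ_n\hY_m$ by the target and using the asymptotic for $\hJ_n$, the quotient behaves like the constant $\pi\hY_m/\hJ_m$ times
\[ \frac{B^{-2(n+A/B)}}{\Gamma(n+A/B+1)\,\Gamma(n+A/B)}, \]
which tends to $0$: by Stirling the product of the two Gamma factors outgrows any fixed exponential $B^{-2n}$, whether $B$ exceeds $1$ or not. Hence $\pi\hJ_n\hY_m=o\big(\Gamma(n+A/B)\,B^{\,n+A/B}\big)$, and combining this with the asymptotic for the dominant term gives (\ref{upsasympt}). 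I expect the main obstacle to be pinning down the correct leading coefficient and sign in the large-order expansion of $Y_\nu$, since it is exactly the factor $-\Gamma(\nu)/\pi$ there that cancels against the explicit $\pi$ in the definition of $\Ups$ and leaves the clean constant $\hJ_m$ in front of $\Gamma(n+A/B)\,B^{\,n+A/B}$.
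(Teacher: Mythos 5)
Your proposal is correct and follows essentially the same route as the paper: both rest on the large-order, fixed-argument asymptotics $\hJ_n\sim B^{-(n+A/B)}/\Gamma(n+A/B+1)$ and $\hY_n\sim -\tfrac1\pi\Gamma(n+A/B)B^{n+A/B}$, after which the term $-\pi\hJ_m\hY_n$ dominates and $\pi\hJ_n\hY_m$ is negligible. The only cosmetic difference is that the paper quotes the Abramowitz--Stegun asymptotics in elementary form and converts them via Stirling, whereas you quote the $\Gamma$-function form directly; you also spell out the dominance of the second term, which the paper leaves implicit.
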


\begin{proof}
 \cite{H64} lists the following asymptotic relations, for fixed $x$ and
$n$ tending to infinity,
\[ J_n(x) \sim \frac{1}{\sqrt{2\pi n}}\left(\frac{ex}{2n}\right)^n
 \quad\mbox{and}\quad Y_n(x)\sim -\sqrt\frac{2}{\pi n}\left(\frac{2n}{ex}\right)^n, \]
hence, by the well-known Stirling formula 
$\Gamma(n+1)\sim \sqrt{2\pi}n^{n+1/2}e^{-n}$ we conclude
\begin{align}
 \hat J_n &\sim \frac1{\Gamma(n+A/B+1)B^{n+A/B}}  \label{jmot0} \\ 
 \hat Y_n &\sim -\frac 1\pi\Gamma(n+A/B)B^{n+A/B} \nonumber
\end{align}
from which the claim follows.
\end{proof}

Now, we have the tools to continue with the problem of finding the speed of the percolation process.

\subsection{Finding the stationary distribution}
Now, (\ref{delta}) fits (\ref{n}) if we set $A=2+\lambda $ and $B=\lambda $. One way of trying to
describe $a_n$ and $b_n$ is through some linear combination of $\Delta$:s. 
By inspection of Claim \ref{diffval}, we  choose to work with
$\Delta(n,1)$ and $\Delta(n,2)$. Then one finds that
\begin{align*}
 a_n &= \frac{2\lambda ^2+8\lambda +5}{\lambda }\cdot\Delta(n,1)-\frac{\lambda +3}{\lambda }\cdot\Delta(n,2), \;\mbox{and} \\
 b_n &= \frac{2\lambda ^2+4\lambda +1}{\lambda }\cdot\Delta(n,1)-\frac{\lambda +1}{\lambda }\cdot\Delta(n,2),
\end{align*}
is true for $n=1,2,3$ which makes it true for every $n$. 

Next, with the aid of Claim \ref{c:upsasympt}, we calculate
\begin{equation} \label{pi0}\pi_0=\lim_{n\to\infty}\frac{b_n}{a_n}= 
\frac
{(2\lambda ^2+4\lambda +1)J_{2+2/\lambda }(2/\lambda )-(\lambda +1)J_{3+2/\lambda }(2/\lambda )}
{(2\lambda ^2+8\lambda +5)J_{2+2/\lambda }(2/\lambda )-(\lambda +3)J_{3+2/\lambda }(2/\lambda )}, \end{equation}
and thereafter, a long, but straightforward, calculation\footnote{The analogous calculation to
that of (2.19) in \cite{Ren10}.}
 yields, for  $n\geq 1$,
\[ \pi_n=c(\hat J_{n-1}-\hat J_n), \quad\mbox{with}\;c=\frac{2}{(2\lambda ^2+8\lambda +5)\hat J_1-(\lambda +3)\hat J_2}, \]
after which it can be verified that $\Pi$ is indeed a distribution since,
\[ \sum_{k=0}^n \pi_k=1-c\hat J_n\to 1, \]
as $\hat J_n\to 0$ by (\ref{jmot0}).

The speed $\speed$ of percolation in this model as given by (\ref{rop}) is $1+\pi_0$, 
with $\pi_0$ as in (\ref{pi0}), also depicted in Figure \ref{speed} with $\lambda $ ranging from 
close to zero\footnote{It appears to be difficult to evaluate the speed for very small values
of $\lambda$, as $B=1/\lambda$ becomes very large. In Figure \ref{speed} it looks as if
the speed at $\lambda=0$ is $\approx 1.2$. However, this is not the case. } to 20. 

\begin{figure}[hbt]
\includegraphics[scale=0.4]{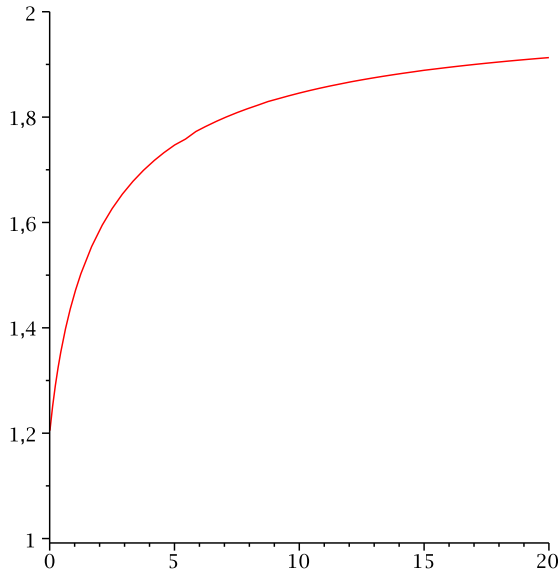}
\caption{The speed of percolation on the ladder as a function of (exponential) intensity $\lambda $ on the vertical edges 
(as measured in units of the intensity of the horizontal edges), plotted by Maple. Note
that the speed at $\lambda=0$ is 1.}
\label{speed}
\end{figure}

Although not defined, the speed as a function of $\lambda =0$ is 1, since $\lambda =0$ must be thought of as
not having any vertical edges at all (or equivalent that the times
associated with these are infinite) and as such the time to reach $V_n$ is
the time it takes until the first of two independent Poisson processes reaches 
$n$. Although this is always a bit faster than the time for a single such process,
the effect wears off as $n$ tends to infinity. 

Similarly, the speed at $\lambda =\infty$ is 2, if we interpret $\lambda =\infty$ as having zero
time associated with vertical edges. Then infection between adjacent vertices
on different levels is immediate, and the time it takes to move one step
up the ladder is the minimum of two exponential variables each having one unit of
intensity, i.e.\ an exponential random time having intensity 2. 

\begin{figure}[hbt]
 \subfigure[$1+\frac{7J_5(2)-2J_4(2)}{15J_5(2)-4J_4(2)}$]{\includegraphics[scale=0.40]{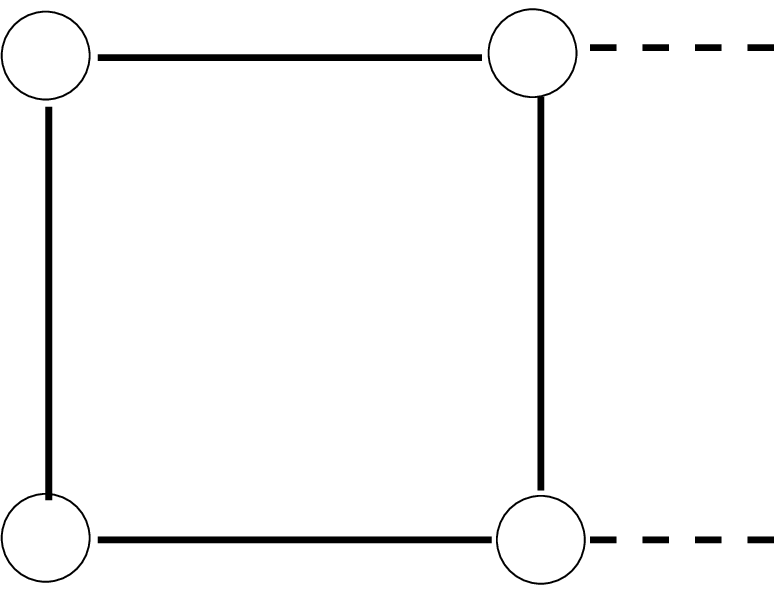}}
 \subfigure[$1+\frac{17J_4(1)-3J_3(1)}{29J_4(1)-5J_3(2)}$]{\includegraphics[scale=0.40]{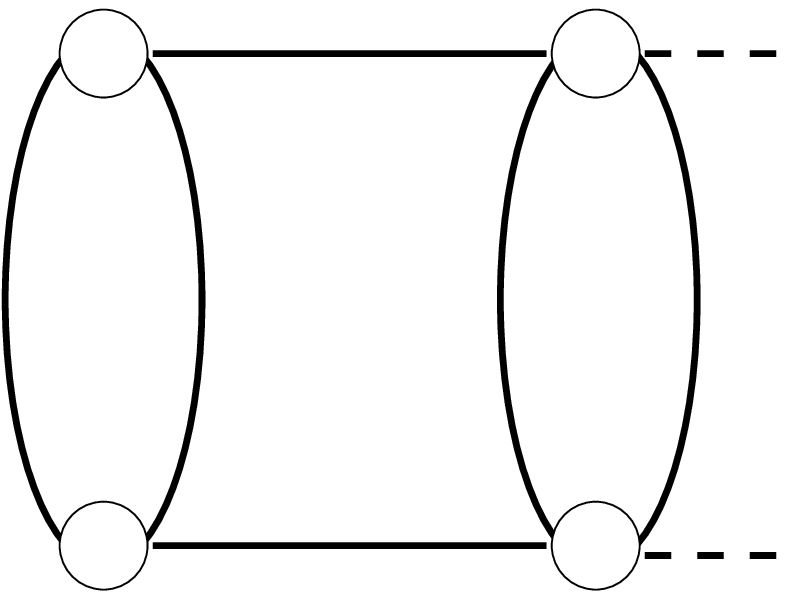}}
 \subfigure[$2+2\cdot\frac{7J_7(4)-3J_6(4)}{15J_7(4)-4J_6(4)}$]{\includegraphics[scale=0.40]{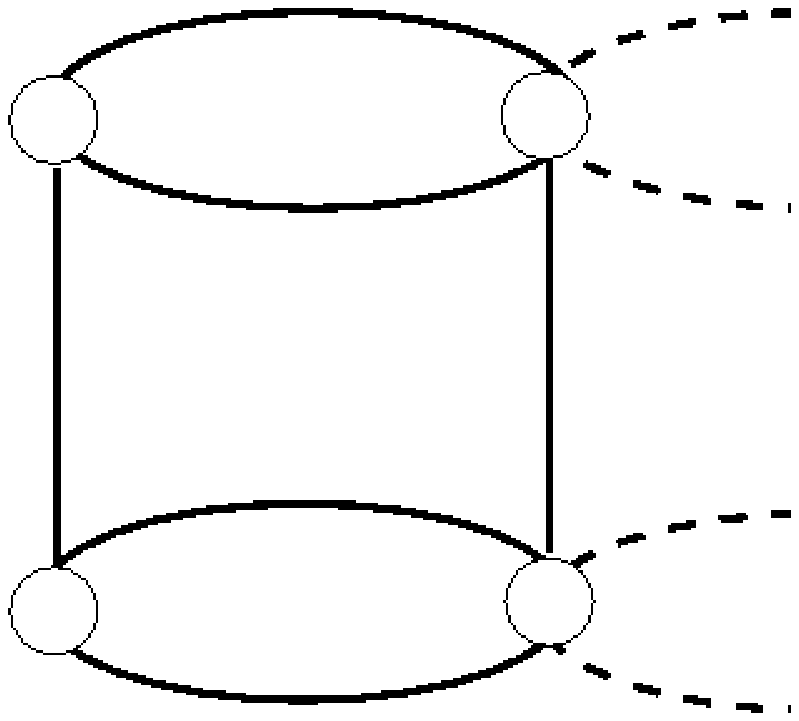}}
\caption{The speed of percolation when all edges are associated with random variables having intensity 1. Numerical
values to two digits are, from left to right, 1.47, 1.59 and 2.74. }
\label{exempel}
\end{figure}

Having the intensity  2 associated with an edge is equivalent to having two (independent) edges associated with
intensity 1, since the minimum time of two intensity 1 exponential random variables is an intensity 2 exponential
random variable. Some examples of the speed of percolation on different ladders are shown in 
Figure \ref{exempel}. Notice that in the rightmost graph, we calculate the speed as two times that
of having $\lambda =1/2$.

\section{Adding diagonals to the ladder}\label{diag}
Next, we see what happens when diagonals are added to the ladder. For this model we cannot find 
a solution through Bessel function, as was the case with the ladder. Neither can we consider arbitrary 
intensities associated to the diagonals, these must be of the same intensity as the horizontal, 
else the front process becomes intractable. 

\begin{figure}[hbt]
 \psfrag{a}{$v$}
 \psfrag{b}{$v'$}
\includegraphics[scale=0.5]{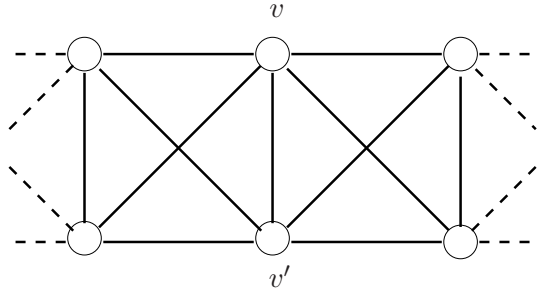}
\caption{Intensities associated with vertical edges have intensity $\lambda $, as measured in the unit 
of intensity that is associated with both the horizontal and diagonal edges. Note that the graph
is the same if we interchange $v$ and $v'$ (without breaking any edges).} 
\label{ekvivalens}
\end{figure}

Part of the graph we are now considering is depicted in Figure \ref{ekvivalens}. In this figure
we see two nodes $v$ and $v'$. If we interchange the positions of these two 
nodes, without breaking any edges, the graph is effectively unaltered. The vertical edges are still
the same (they are now ``upside down'', but this is irrelevant as edges are undirected). 
Four horizontal edges have become diagonal and vice versa, but as these are all associated with 
the same intensities, nothing has essentially changed. This is, of course, the reason we need
to have the same intensities on horizontal and diagonal edges. 

Consider the front process, defined by (\ref{N}),(\ref{M}) and (\ref{F}), at time $t$. If we disregard
all nodes below height $M_t$, there are $2^k$ different sets of infected nodes (at $t$) that could
yield $\{F_t=k\}$. From any such state the intensity to move to to any other state that yields
$\{F_{t'}=k'\}$ is the same, and as such the front process will be a Markov chain on $\mathbb N$. 

In Figure \ref{Fis2} we illustrate this when the front process is found to be 2 at $t$. 

\begin{figure}[hbt]
 \subfigure{\includegraphics[scale=0.3]{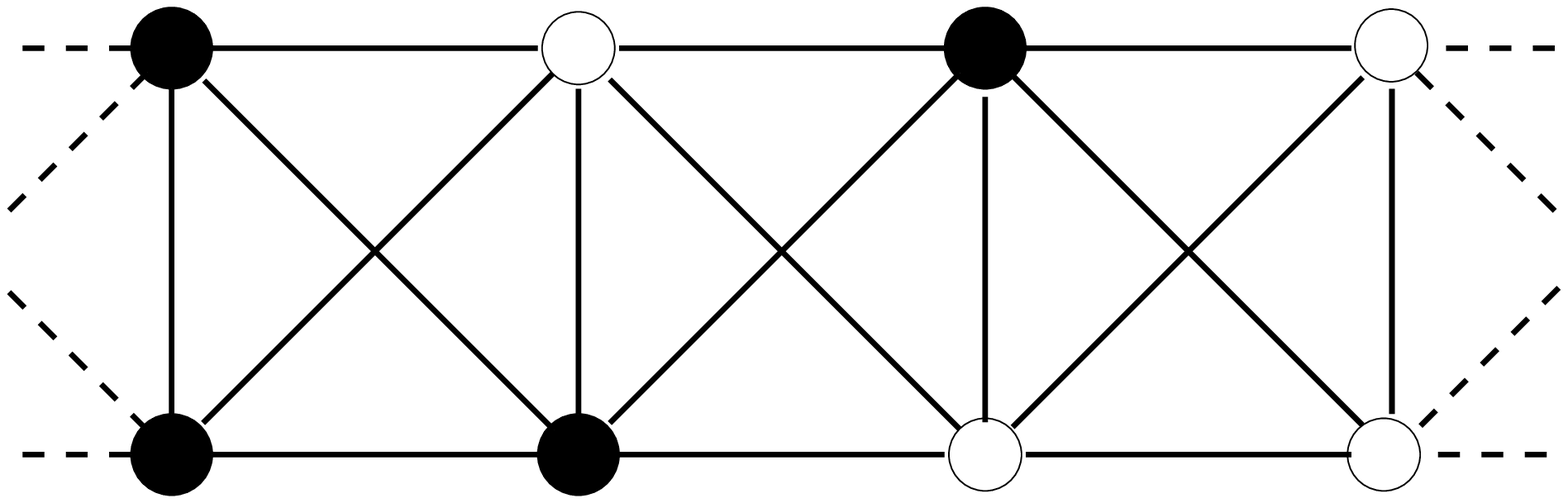}}\quad\quad
 \subfigure{\includegraphics[scale=0.3]{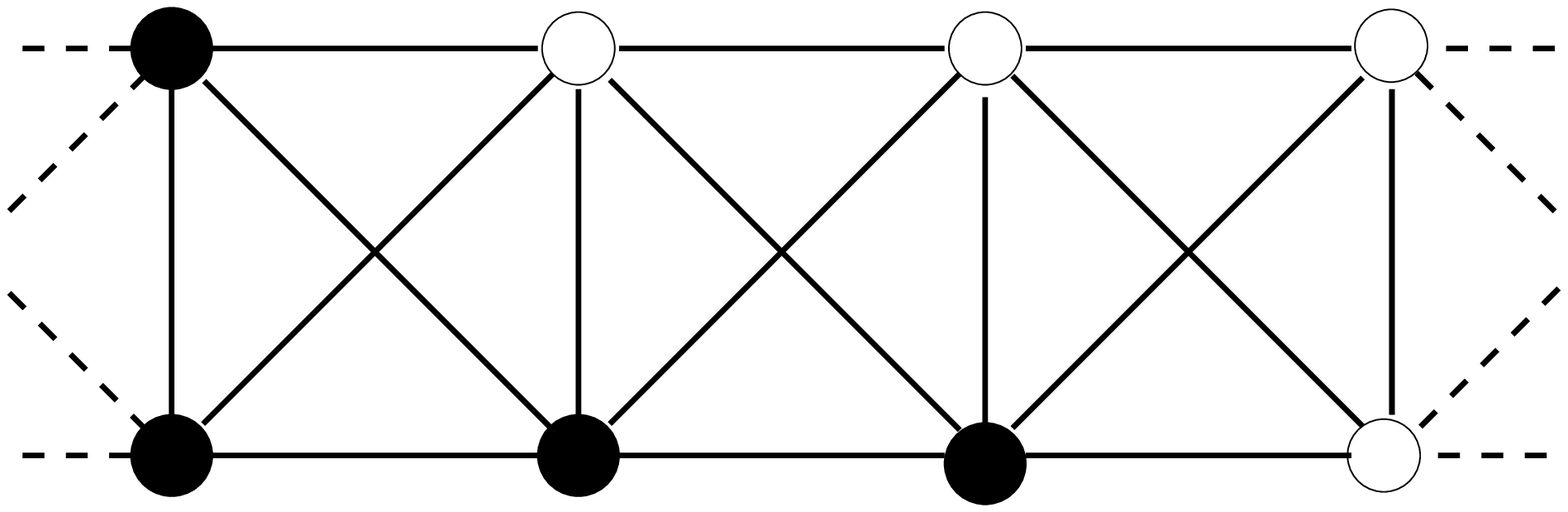}}
\caption{Two of the four possible sets of infected nodes at $t$ (disregarding nodes below
height $M_t$)  that would yield $F_t=2$. The infected nodes are marked as black. 
The remaining two sets are found by mirroring the cases above through a horizontal line.}
\label{Fis2}
\end{figure}

Now, it is straightforward to compute the intensity matrix $Q$ of the front process, e.g.\ when
in state 2, as in Figure \ref{Fis2}, then intensities to state 0 is $1+\lambda $, to state 1 is $3+\lambda $ and
to state 3 is $2$, and by similar considerations we get

\begin{equation} \label{Q2}
Q=\left( \begin{array}{rrrrrrr}
-4 & 4 & 0 & 0 & 0  \\
 2+\lambda  & -4-\lambda  & 2 & 0 & 0 \\
 1+\lambda  & 3+\lambda  & -6-2\lambda  & 2 &0  &\ldots \\
 1+\lambda  & 2+\lambda  & 3+\lambda  & -8-3\lambda  & 2 \\
 1+\lambda  & 2+\lambda  & 2+\lambda  & 3+\lambda  & -10-4\lambda  \\
 1+\lambda  & 2+\lambda  & 2+\lambda  & 2+\lambda  & 3+\lambda  &   \\
  &  &\vdots  &&  &  \ddots
\end{array} \right). \end{equation}

We proceed to find the stationary distribution $\Pi=(\pi_0,\pi_1,\ldots)$.
Analogously to the computation of (\ref{1}), (\ref{2}), (\ref{3}) and (\ref{n}) we get 
\begin{equation}\label{n2}
 \pi_n=[(2+\lambda )n+3]\pi_{n-1}-[(2+\lambda )(n-2)+4]\pi_{n-2}+2\pi_{n-3},\quad n\geq 4,
\end{equation}
and if we let $\pi_n=c_n\pi_0-d_n$, for $n\geq 1$, we have
\begin{align*}
(c_1,c_2,c_3) &= (5+\lambda ,28+17\lambda +2\lambda ^2,226+226\lambda +68\lambda ^2+6\lambda ^3),\;\mbox{and} \\
(d_1,d_2,d_3) &= (1+\lambda ,8+9\lambda +2\lambda ^2,66+98\lambda +44\lambda ^2+6\lambda ^3),
\end{align*}
and $c_n,d_n$ for $n\geq 4$ can be determined via relation (\ref{n2}) with
$c_n$ or $d_n$ in place of $\pi_n$. As in the previous section, we aim to determine
$\pi_0$ via
\begin{equation}\label{limfrac} \lim_{n\to\infty} \frac{d_n}{c_n}. \end{equation}
Once, this is done, the speed of percolation $\speed$ is 
\[ \speed=4\pi_0+2(1-\pi_0)=2(1+\pi_0). \]

\subsection{Turning to generating functions}

Next, we employ the recursive techniques of \cite{Jan10} to get an explicit expression
for $c_n$ and $d_n$. For ease of comparison, we will adopt the notation of that
paper. \cite{Jan10} considers recursively defined sequences $a_n$ of the form
\begin{equation} \label{srec}
a_n=\sum_{i=1}^K[\alpha_i(n-i)+\tb_i]a_{n-i},\quad n\geq K, 
\end{equation}
where  $\alpha_1,\ldots,\alpha_K$ and $\tb_1,\ldots,\tb_K$ are given numbers, and
the initial part of the sequence $a_0,\ldots,a_{K-1}$ is fixed. Also, the numbers have been 
normalized as to give $\alpha_1=1$. We may work in this setting if we, for $n\geq 0$ set
\[ \sa_n=c_{n+1}(2+\lambda )^{-n} \quad \mbox{and}\quad \ta_n=d_{n+1}(2+\lambda )^{-n}.  \]
Our main interest is $\lim_n d_n/c_n$ which will equal $\lim_n \ta_n/\sa_n$.

So, we aim to find a formula for $a_n$ given by (\ref{srec}) where $K=3$ and
$\alpha_1=1$, $\alpha_3=0$ and $\alpha_2$ typically is negative. Set $\alpha=-\alpha_2$. 
Then the parameters 
\begin{align*} 
\alpha &=\frac1{2+\lambda }, \\
\tb_1 &= 2+\frac3{2+\lambda }=\frac{7+2\lambda }{2+\lambda }, \\
\tb_2 &= -\frac{1}{2+\lambda }-\frac4{(2+\lambda )^2}=-\frac{6+\lambda }{(2+\lambda )^2}\;\mbox{and}\\
\tb_3 &= \frac2{(2+\lambda )^3},
\end{align*}
 are the
same for the sequences $\sa_n$ and $\ta_n$, which then ``only'' differ in
initial values
\begin{align*}
(\sa_1,\sa_2,\sa_3) &= (5+\lambda ,\frac{28+17\lambda +2\lambda ^2}{2+v},\frac{226+226\lambda +68\lambda ^2+6\lambda ^3}{(2+\lambda )^2}),\;\mbox{and} \\
(\ta_1,\ta_2,\ta_3) &= (1+\lambda ,\frac{8+9\lambda +2\lambda ^2}{2+v},\frac{66+98\lambda +44\lambda ^2+6\lambda ^3}{(2+\lambda )^2}).
\end{align*}
Following \cite{Jan10} we define the generating function 
\[ A(z)=\sum_{k=0}^\infty a_kz^k \]
for the sequence $a_n$. Next, relation (\ref{srec}), with $K=3$ and parameters as above, yields
\begin{equation}\label{difeq} A(z)q(z)=A'(z)z^2p(z)+r(z), \end{equation}
with
\begin{align}
 p(z) &=\sum_{i=1}^3\alpha_iz^{i-1}=1-\alpha z, \nonumber\\ 
 q(z) &= 1-\sum_{i=1}^3\tilde\beta_iz^i=1-\tilde\beta_1z-\tilde\beta_2z^2-\tilde\beta_3z^3,\quad\mbox{and} \nonumber\\
 r(z) &= \sum_{i=0}^2 R_iz^i,\quad\mbox{with} \label{ing} \\
 R_0 &= a_0,\quad R_1=a_1-\tb_1 a_0,\quad\mbox{and}\quad R_2=a_2-(1+\tb_1)a_1-\tb_2 a_2. \nonumber
\end{align}
Let $I\subset \mathbb R$ be such that $0\in I$ and  $p(z)\neq 0$ on $I$. Theorem 3.2 (i) and (iv) of \cite{Jan10}
reveals that there exists a solution $A$ to (\ref{difeq}) in $I$ and any such solution satisfies the asymptotic expansion
$A(z)=\sum_{n=0}^N a_nz^n+\bigo(z^{N+1})$, as $z\to 0$.

Now, of the three polynomials $p$, $q$ and $r$ it is only the latter that differs between $\sa_n$ and $\ta_n$, so let us
denote these specific polynomials $\sr(z)=\sum \sR_iz^{i-1}$ and $\tr(z)=\sum\tR_iz^{i-1}$, respectively. 
The coefficients simplify to
\begin{align}
 \sR_0 &=5+\lambda , \nonumber \\
 \sR_1 &=\frac{28+17\lambda +2\lambda ^2}{2+\lambda }-\frac{7+2\lambda }{2+\lambda }(5+\lambda )=-\frac7{2+\lambda }=-7\alpha, 
\nonumber \\
 \sR_2 &=\frac{226+226\lambda +68\lambda ^2+6\lambda ^3}{(2+\lambda )^2}-\left(1+\frac{226+226\lambda +68\lambda ^2+6\lambda ^3}{(2+\lambda )^2}\right) 
\nonumber\\
&\phantom{=}+\frac{6+\lambda }{(2+\lambda )^2}(5+\lambda ) = \frac 4{(2+\lambda )^2}=4\alpha^2, 
\nonumber\\
 \tR_0 &=1+\lambda ,
\nonumber\\
 \tR_1 &=\frac{8+9\lambda +2\lambda ^2}{2+v}-\frac{7+2\lambda }{2+\lambda }(1+\lambda )=\frac 1{2+\lambda }=\alpha, \quad\mbox{and} 
\nonumber\\
 \tR_2 &=\frac{66+98\lambda +44\lambda ^2+6\lambda ^3}{(2+\lambda )^2}-\left(1+\frac{66+98\lambda +44\lambda ^2+6\lambda ^3}{(2+\lambda )^2}\right) 
\nonumber\\
&\phantom{=}+\frac{6+\lambda }{(2+\lambda )^2}(1+\lambda ) = 0. \label{allaR}
\end{align}
Still using the notation of \cite{Jan10}, we implicitly define $\gamma$ and the rational
function $g(z)$ via 
\begin{equation*}
 \frac{q(z)}{z^2p(z)}=z^{-2}+\gamma z^{-1}+g(z),
\end{equation*}
 so that 
 \begin{equation*}
 \gamma=\alpha-\tilde\beta_1=-\frac{6+2\lambda}{2+\lambda} \quad\mbox{and}\quad 
g(z)=\frac{\alpha(\alpha-\tb_1)-\tb_2-\tb_3z}{1-\alpha z}.
 \end{equation*}
We fix an antiderivative $G$ to $g$  as  
\begin{equation*}
 G(z)=Cz+D\ln(1-\alpha z), 
\end{equation*}
where 
\begin{equation*}
C=\frac{\tb_3}{\alpha}=2\alpha^2 \quad\mbox{and}\quad 
D=\frac{\tb_3}{\alpha^2}+\frac{\tb_2}{\alpha}+\tb_1-\alpha=\alpha(2+\lambda)=1.
\end{equation*} 
Now, we can write down what in \cite{Jan10} is called \emph{the principal solution}
to (\ref{difeq}), which we consider for $z<0$, 
\begin{align}
 A_0(z) &=
\int_0^\infty (1-zt)^\gamma\exp\left\{-t-G\left(\frac{z}{1-zt}\right)+G(z)\right\}
\frac{r\left(\frac{z}{1-zt}\right)}{p\left(\frac{z}{1-zt}\right)}\dd t 
\nonumber \\
&=
\int_0^\infty \Bigg[ (1-zt)^\gamma \frac{(1-\alpha z)^D(1-zt)^D}{(1-(\alpha+t)z)^D}\exp\left\{-\frac{Ctz^2}{1-zt} \right\}
e^{-t} \nonumber \\
& \quad\quad\quad\quad 
 \cdot\frac{1-zt}{1-(\alpha+t)z}\sum_{i=0}^2 R_i\left(\frac{z}{1-zt}\right)^i\Bigg] \dd t \label{tbc}
\intertext{where we by three Taylor expansions and the relation $(-x)^{\ff n}=(-1)^nx^{\rf n}$  get, as $z\nearrow 0$,}
(\ref{tbc}) &=\sum_{i=0}^2 R_i\int_0^\infty \Bigg[ (1-zt)^{\gamma+D+1-i}
\sum_{j=0}^N\frac {(-\alpha)^jD^{\ff j}z^j}{j!}  \nonumber \\
& \quad    \cdot
\sum_{m=0}^N \frac{(-\alpha-t)^m(-D-1)^{\ff m}z^m}{m!} 
\sum_{k=0}^N\frac {(-Ctz^2)^k}{k!(1-zt)^k}
e^{-t}z^i\Bigg]\dd t 
+\bigo(z^{N+1})\nonumber \\
&= 
\sum_{i=0}^2 R_i
\sum_{j=0}^N\frac {(-\alpha)^jD^{\ff j}}{j!}
\sum_{k=0}^N\frac {(-C)^k}{k!}
\sum_{m=0}^N \frac{(D+1)^{\rf m}}{m!}  
 \nonumber \\
&\quad\quad\quad\quad  \cdot  \underbrace{
\int_0^\infty  (1-zt)^{\gamma+D+1-i-k}t^k(\alpha+t)^me^{-t}\dd t}_{=:I}\cdot z^{i+j+2k+m} . \label{I}
\end{align}
The integral  denoted $I$ 
in (\ref{I}), 
 equals, by Taylor expansion, as $z\nearrow 0$,
\begin{align}
& I=\int_0^\infty  \sum_{l=0}^N \frac{(-t)^l(\gamma+D+1-i-k)^{\ff l}z^l}{l!}t^k(\alpha+t)^me^{-t}\dd t+\bigo(z^{N+1}) 
\nonumber \\
&=
\sum_{l=0}^N \frac{(k+i-\gamma-D-1)^{\rf l}}{l!}z^l\int_0^\infty t^{k+l}
\sum_{n=0}^m\binom mnt^n\alpha^{m-n}e^{-t}\dd t + \bigo(z^{N+1})= 
\nonumber \\ 
\intertext{}
&=
\sum_{l=0}^N \frac{(k+i-\gamma-D-1)^{\rf l}}{l!}z^l\sum_{n=0}^m\binom mn\alpha^{m-n}(k+l+n)!  + \bigo(z^{N+1}). \label{theI}
\end{align}
So, from (\ref{I}) and (\ref{theI}), we get the following formula  
\begin{align}
&A_0 (z) =\sum_{i=0}^2 R_i
\sum_{j=0}^N\frac {(-\alpha)^jD^{\ff j}}{j!}
\sum_{k=0}^N\frac {(-C)^k}{k!}
\sum_{l=0}^N \frac{(k+i-\gamma-D-1)^{\rf l}}{l!} \nonumber \\
& \cdot \sum_{m=0}^N \frac{(D+1)^{\rf m}}{m!}
\sum_{n=0}^m\binom mn\alpha^{m-n}(k+l+n)!
 z^{i+j+2k+l+m}  + \bigo(z^{N+1}), \label{theform}
\end{align}
which simplifies somewhat since $D=1$ in our present application. 
 Using this, and letting
$B_0$ and $B_1$ be the nonzero summands of $\sum_{j=0}^\infty (-\alpha)^jD^{\ff j}/j!$, i.e.\
$B_0=1$ and $B_1=-\alpha$,  and by defining
\begin{align}
 F_i(k,l,m) &= 
\frac{(-C)^k}{k!}\frac{(k+i-\gamma-2)^{\rf l}}{l!}\frac{2^{\rf m}}{m!}\sum_{n=0}^m\binom mn\alpha^{m-n}(k+l+n)!, \label{F_i}
\end{align}
we can write, when considering $A_0(z)$ as an infinite sum, 
\begin{align*}
 A_0(z) &=\sum_{i=0}^2R_i\sum_{j=0}^1 B_j \sum_{k,l,m\geq 0} F_i(k,l,m)z^{i+j+2k+l+m}.
\end{align*}
Further, defining $B_j=0$ if $j\notin\{0,1\}$ and $R_i=0$ if $i\notin\{0,1,2\}$ makes 
\begin{equation}\label{id}
 A_0(z)=\sum_{L=0}^3\;
 \sum_{k,l,m\geq 0}\;
 \sum_{i+j=L}\;B_jR_iF_i(k,l,m) z^{L+2k+l+m}.
\end{equation}
Since $A_0(z)$ can be written on the form (\ref{theform}) this
allows us, by Theorem 3.2 of \cite{Jan10}, to identify the coefficients
of $A_0(z)$ to those of $A(z)$, so from (\ref{id}),
\begin{equation}\label{a_N}
 a_N=\sum_{L=0}^3 \;
\sum_{2k+l+m=N-L} \;
\sum_{i+j=L}\;B_jR_iF_i(k,l,m),
\end{equation}
if we follow the convention that an empty sum is zero. 

Now, to get a handle on $a_N$, consider 
\begin{align}
 &\hat F_i(M) =\sum_{2k+l+m=M} F_i(k,l,m) \nonumber \\
&= \sum_{2k+l+m=M} \frac{(-C)^k}{k!}\frac{(k+i-\gamma-2)^{\rf l} }{l!}(m+1)\sum_{n=0}^m
 \binom{m}{n}\alpha^{m-n}(k+l+n)!. \label{hatF}
\end{align}

\begin{claim}\label{theLi}
\begin{equation} \label{Li}
L_i:=\lim_{M\to\infty}\frac{\hat F_i(M)}{M!M^{\hat\gamma+1+i}} = 
\sum_{k=0}^\infty \frac{(-C)^k}{k!\Gamma(k+1+\hat\gamma+i)}
\int_0^1(1-x)^{\hat\gamma+k+i}xe^{\alpha x}\dd x, 
\end{equation}
where $\hat\gamma=-\gamma-3=-\lambda/(2+\lambda)\in(-1,0]$.
\end{claim}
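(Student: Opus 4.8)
\emph{Proof proposal.}
The plan is to turn the inner $n$-sum in \eqref{hatF} into a Laplace-type integral, extract its leading order using the fact that $k+l+m$ is constant on the index set, and recognise the surviving double sum as a Riemann sum for the integral in \eqref{Li}. Write $\beta=i-\gamma-2=\hat\gamma+1+i$ for brevity (this is \emph{not} Janson's $\tb_i$); by the stated range of $\hat\gamma$ we have $\beta>0$. The cornerstone is the elementary identity
\[ \sum_{n=0}^m\binom mn\alpha^{m-n}(k+l+n)!=\int_0^\infty t^{k+l}(\alpha+t)^m e^{-t}\dd t, \]
obtained by writing $(k+l+n)!=\int_0^\infty t^{k+l+n}e^{-t}\dd t$ and applying the binomial theorem under the integral sign. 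Substituting this into \eqref{hatF} gives $\hat F_i(M)=\sum_{k\ge0}\frac{(-C)^k}{k!}g_k(M)$, where $g_k(M)$ is the sum over $l+m=M-2k$ of $\frac{(k+\beta)^{\rf l}}{l!}(m+1)$ times the integral above.

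The decisive observation is that along the index set $k+l+m=M-k$ is \emph{constant}, so each integral equals $\int_0^\infty t^{M-k}(1+\alpha/t)^m e^{-t}\dd t$. Fixing $k$ and the split $l+m=M-2k$, I would apply Laplace's method: the factor $t^{M-k}e^{-t}$ concentrates at $t=M-k$ with width $\bigo(\sqrt M)$ while $(1+\alpha/t)^m$ is slowly varying there, so the integral is $(M-k)!\,e^{\alpha m/(M-k)}\bigl(1+o(1)\bigr)$. Combining this with $\frac{(k+\beta)^{\rf l}}{l!}\sim l^{\,k+\beta-1}/\Gamma(k+\beta)$ and $(M-k)!\sim M!/M^{k}$, and substituting $m=xM$ (so $l\sim(1-x)M$, $m+1\sim xM$ and $e^{\alpha m/(M-k)}\to e^{\alpha x}$), the double sum defining $g_k(M)$ becomes a Riemann sum whose continuum limit is $\frac{1}{\Gamma(k+\beta)}\int_0^1(1-x)^{\,k+\beta-1}x\,e^{\alpha x}\dd x$. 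Matching the accompanying powers of $M$ against the normalisation in \eqref{Li}, the contribution of each fixed $k$ is exactly the $k$-th summand of \eqref{Li}.

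It remains to sum over $k$. Since $C=2\alpha^2$ and $\beta>0$, the series on the right of \eqref{Li} converges absolutely, so $L_i$ is well defined; the task is to interchange $\lim_{M\to\infty}$ with $\sum_k$. I expect the whole difficulty of the claim to lie in making the three approximations above \emph{uniform} and in justifying this interchange. Concretely, (i) the Laplace estimate must hold with error uniform in the split $l+m=M-2k$; the substitution $u=\alpha+t$ rewrites the integral as $e^{\alpha}\int_\alpha^\infty u^{M-k}(1-\alpha/u)^{k+l}e^{-u}\dd u$, where $u^{M-k}e^{-u}$ concentrates at $u\sim M$ and $(1-\alpha/u)^{k+l}\in(0,1]$ varies slowly, making the uniformity transparent. (ii) The replacement of $(k+\beta)^{\rf l}/l!$ by $l^{\,k+\beta-1}/\Gamma(k+\beta)$ fails for small $l$ (i.e.\ $x$ near $1$), but there the weight $(1-x)^{\,k+\beta-1}$ is integrable because $\beta>0$, so those terms are negligible and absorbed into the tail of a convergent sum. (iii) For the interchange I would establish a bound on the $k$-th normalised summand of the form $\mathrm{const}^{k}/(k!\,\Gamma(k+\beta))$, uniform in $M$, and apply dominated convergence on $\mathbb N$. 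Producing this uniform bound is the main obstacle, since it requires the leading-order analysis of the first two steps to be quantitative and uniform simultaneously in $k$ and in the split $(l,m)$.
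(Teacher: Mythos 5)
Your proposal follows essentially the same route as the paper's Appendix~\ref{pLi} proof: isolate the inner binomial sum and show it is asymptotic to $(M-k)!\,e^{\alpha m/(M-k)}$, approximate the ratio $\Gamma(k+i+\hat\gamma+l+1)/\Gamma(l+1)$ by $l^{\,k+i+\hat\gamma}$ via Stirling, and pass to the integral $\int_0^1(1-x)^{\hat\gamma+k+i}xe^{\alpha x}\dd x$ as a Riemann-sum limit over the split $l+m=M-2k$ before summing over $k$. The only real difference is cosmetic --- you obtain the inner-sum asymptotic from the integral representation $\int_0^\infty t^{k+l}(\alpha+t)^m e^{-t}\dd t$ by Laplace's method, where the paper bounds the product $\prod_{i=0}^{j-1}(m-i)/(m+K-i)$ directly from above and below --- and the uniformity/dominated-convergence issue for the interchange of the $k$-sum with $M\to\infty$, which you flag as the main obstacle, is likewise left implicit in the paper.
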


\begin{proof}
The details are given in Appendix \ref{pLi}.
\end{proof}

We need to find a useful expression
for the integral that appears in (\ref{Li}).
\begin{claim}\label{theIn}
Define, for $n\in\mathbb N$,  
\begin{align}   \label{IoJ}
 I(n) =\int_0^1(1-x)^{\hat\gamma+n}xe^{\alpha x}\dd x \quad\mbox{and} \quad
 J(n) =\int_0^1(1-x)^{\hat\gamma+n}e^{\alpha x}\dd x. 
\end{align}
Then
\begin{align*} 
 I(n) =
\frac{\Gamma(\hat\gamma+n+1)}{\alpha^n}\bigg[ \frac{I(0)-\frac{n}{\alpha}J(0)}{\Gamma(\hat\gamma+1)}+
\frac1{\alpha^2}\sum_{m=1}^{n}\frac{(n+1-m)\alpha^{m}}{\Gamma(\hat\gamma+1+m)}\bigg]. 
\end{align*}
\end{claim}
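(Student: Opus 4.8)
The plan is to reduce both integrals to a single one-parameter family and then solve a first-order recursion, so that the claim becomes a telescoping identity. First I would record the elementary relation
\[ I(n)=J(n)-J(n+1), \]
which follows by writing $x=1-(1-x)$ in the integrand of $I(n)$ and splitting the integral. Next I would integrate $J(n)$ by parts, differentiating $e^{\alpha x}$ and integrating $(1-x)^{\hat\gamma+n}$; since $\hat\gamma+n+1>0$ the boundary term at $x=1$ vanishes and the one at $x=0$ contributes $1/(\hat\gamma+n+1)$, giving
\[ \alpha J(n+1)=(\hat\gamma+n+1)J(n)-1,\qquad n\geq 0. \]
These two relations are the only analytic input; everything else is algebra.

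Second, I would solve this first-order linear recursion for $J(n)$ in terms of the single base value $J(0)$. Using the summation factor $\Gamma(\hat\gamma+1+n)/(\alpha^n\Gamma(\hat\gamma+1))$, which is the solution of the homogeneous part, one obtains
\[ J(n)=\frac{\Gamma(\hat\gamma+1+n)}{\alpha^n}\Bigg[\frac{J(0)}{\Gamma(\hat\gamma+1)}-\frac1\alpha\sum_{k=1}^{n}\frac{\alpha^{k}}{\Gamma(\hat\gamma+1+k)}\Bigg], \]
readily checked by induction against the recursion. Substituting this into $I(n)=J(n)-J(n+1)$ and factoring out $\Gamma(\hat\gamma+1+n)/\alpha^n$ leaves a bracketed expression that must be matched with the bracket in the claim.

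Finally, writing $x_m=\alpha^m/\Gamma(\hat\gamma+1+m)$ and using $I(0)=J(0)-J(1)$ to rewrite the $I(0)$-term, the whole claim collapses to the purely arithmetic identity
\[ (\hat\gamma+1+n-\alpha)\sum_{m=1}^{n}x_m+\alpha x_n=\alpha x_0+\sum_{m=1}^{n}(n+1-m)x_m. \]
The clean way to dispatch this is to move everything to one side: the coefficient of $x_m$ becomes $(\hat\gamma+1+n-\alpha)-(n+1-m)=(\hat\gamma+m)-\alpha$, and the Gamma recursion gives $(\hat\gamma+m)x_m=\alpha x_{m-1}$, so that $[(\hat\gamma+m)-\alpha]x_m=\alpha(x_{m-1}-x_m)$ and the sum telescopes to $\alpha(x_0-x_n)$, exactly cancelling the remaining $\alpha x_n-\alpha x_0$. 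The main obstacle is thus not conceptual but bookkeeping: keeping the index shifts in the recursion-solving step consistent, and recognizing that the weight $(n+1-m)$ is precisely what makes the $x_m$-coefficient localize to $(\hat\gamma+m)-\alpha$ so that it telescopes. I expect the substitution and rearrangement in the second step to be the most error-prone part.
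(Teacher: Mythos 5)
Your proof is correct, but it routes around the paper's argument in a way worth noting. The paper derives \emph{two} recursions by partial integration, one for $I(n)$ in terms of $I(n-1)$ and $J(n)$ and one for $J(n)$ in terms of $J(n-1)$, iterates both, substitutes the second into the first, and then spends most of the effort reindexing the resulting double sum $\sum_{j}\sum_{k}$ into the single weighted sum $\sum_m (n+1-m)\alpha^m/\Gamma(\hat\gamma+1+m)$. You instead observe the pointwise identity $I(n)=J(n)-J(n+1)$ (from $x=1-(1-x)$), which makes the $I$-recursion unnecessary: only the single first-order recursion $\alpha J(n+1)=(\hat\gamma+n+1)J(n)-1$ is needed, its closed form agrees with the paper's (\ref{Jform}) after the substitution $m=n-k$, and the final verification reduces to the telescoping identity $[(\hat\gamma+m)-\alpha]x_m=\alpha(x_{m-1}-x_m)$ with $x_m=\alpha^m/\Gamma(\hat\gamma+1+m)$ --- I checked this and it closes exactly, including the use of $I(0)=J(0)-J(1)$ to absorb the $1/(\alpha\Gamma(\hat\gamma+1))$ term into the $I(0)$ coefficient. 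The analytic input is identical (one integration by parts, valid because $\hat\gamma>-1$), but your bookkeeping is lighter: no double sum ever appears, and the weight $(n+1-m)$ emerges from the difference $\Sigma_{n+1},\Sigma_n$ of partial sums rather than from counting how often each value of $j-k$ occurs. A minor side remark: the paper's displayed intermediate recursion for $I(n)$ omits a factor $(\hat\gamma+n)$ in front of $I(n-1)$ (its iterated form (\ref{Irec}) shows the intended factor), so your route also sidesteps that slip entirely.
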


\begin{proof}
 The details are given in Appendix \ref{pIn}. (Note that the result is true for 
any $\alpha\neq 0$ but that it is essential that $\hat\gamma>-1$.)
\end{proof}

\subsection{Evaluating the limit} Now we return to examination of
the limits $L_i$ given in (\ref{Li}).

By (\ref{Li}), (\ref{IoJ}) and (\ref{Jf}) we have, for $i\in\{0,1,2\}$,
\begin{align}
 L_i &= \sum_{k=0}^\infty\frac{(-C)^k}{k!\Gamma(\hat\gamma+k+1+i)}I(k+i) \nonumber \\
&= \frac{I(0)-\frac i\alpha J(0)}{\alpha^i\Gamma(\hat\gamma+1)}S_3
-\frac{J(0)}{\alpha^{i+1}\Gamma(\hat\gamma+1)} S_4+\frac1{\alpha^{i+2}} S(i), \label{Li2}
\end{align}
where 
\begin{align}
 S_3 &=\sum_{k=0}^\infty \frac{(-C/\alpha)^k}{k!}
=e^{-C/\alpha}=e^{-2\alpha}, \nonumber \\
S_4 &= \sum_{k=0}^\infty\frac{k(-C/\alpha)^k}{k!}=-\frac C\alpha e^{-C/\alpha}=-2\alpha e^{-2\alpha}, 
\quad\mbox{and} \nonumber \\
S(i) &= \sum_{k=1}^\infty\sum_{m=1}^{k+i}f(k,m,i), \quad\mbox{where} \nonumber \\
f(k,m,i)&=\frac{(-1)^k(2\alpha)^k\alpha^m(k+i+1-m)}{k!\Gamma(\hat\gamma+1+m)}. \label{f}
\end{align}
This yields, by rearranging (\ref{Li2}) and using $C/\alpha^2=2$,
\begin{equation}
L_i= \frac1{\alpha^i}
\left( 
\frac{e^{-2\alpha}[I(0)+2J(0)]}{\Gamma(\hat\gamma+1)}-i\cdot\frac{e^{-2\alpha}J(0)}{\alpha\Gamma(\hat\gamma+1)}
+\frac{1}{\alpha^{2}}S(i) 
\right). \label{Lif}
\end{equation}

Recall that $\alpha\in(0,1/2]$ and that $\hat\gamma>-1$ so that $f$ defined in (\ref{f})
satisfies $|f(k,l,m)|<|k+i+1-m|/k!m!$ when $k\geq 0$ and $m\geq 1$, and so
the sum $S(i)$ is absolutely convergent and we are thus free to interchange the
order of summation
\begin{align} 
 S(0) &= \sum_{k=0}^\infty\sum_{m=1}^{k}f(k,m,0)= \sum_{k=1}^\infty\sum_{m=1}^{k}f(k,m,0)=\sum_{m=1}^{\infty}\sum_{k=m}^{\infty}f(k,m,0)
\nonumber \\
&=\sum_{m=1}^{\infty}\sum_{j=0}^{\infty} f(j+m,m,0)=\sum_{j=0}^{\infty}\sum_{m=1}^{\infty}f(j+m,m,0) 
\nonumber \\
&=\sum_{j=0}^\infty (-2\alpha)^{j}(j+1) \sum_{m=1}^\infty \frac{(-1)^m(2\alpha^2)^m}{\Gamma(m+1+\hat\gamma)\Gamma(m+1+j)}=
\mathscr S(0), \label{S0}
\end{align}
if we define
\begin{equation} 
 \mathscr S(k) =\sum_{j=0}^\infty (-2\alpha)^{j}(j+1+k) 
\sum_{m=1}^\infty \frac{(-1)^m(2\alpha^2)^m}{\Gamma(m+1+\hat\gamma)\Gamma(m+1+j)}. \label{scrS}
\end{equation}
Similarly to how we handled $S(0)$ we get
\begin{align}
S(1) &=  \sum_{j=0}^\infty \sum_{m=1}^\infty f(j+m-1,m,1) 
\nonumber\\
&=\sum_{j=0}^\infty (-2\alpha)^{j-1}(j+1) \sum_{m=1}^\infty \frac{(-1)^m(2\alpha^2)^m}{\Gamma(m+1+\hat\gamma)\Gamma(m+1+j-1)} 
\nonumber\\
&=-\frac 1{2\alpha}\sum_{m=1}^\infty\frac{(-1)^m(2\alpha^2)^m}{(m-1)!\Gamma(m+1+\hat\gamma)}+\mathscr S(1) 
\nonumber\\
&= \frac{2^{(1-\hat\gamma)/2}}{2\alpha^{\hat\gamma}}J_{1+\hat\gamma}(2\sqrt 2\alpha)+\mathscr S(1), \label{S1}
\end{align}
where $J$ denotes the Bessel function of the first kind, as well as
\begin{align}
S(2) &=  \sum_{k=0}^\infty\sum_{m=1}^{k+2}f(k,m,2) =\sum_{k=0}^\infty\sum_{m=1}^{k+1}f(k,m,2)+\sum_{k=0}^\infty f(k,k+2,2) 
\nonumber\\
&=\sum_{m=1}^\infty\sum_{j=0}^{\infty}f(j+m-1,m,2)+\alpha^2\sum_{k=0}^\infty\frac{(-1)^k(2\alpha^2)^k}{k!\Gamma(k+3+\hat\gamma)}
\nonumber \\
&=-\frac{2}{2\alpha}\sum_{m=1}^\infty\frac{(-2\alpha)^k}{(m-1)!\Gamma(m+1+\hat\gamma)}
+\mathscr S(2)+\frac{2^{-\hat\gamma/2}}{2\alpha^{\hat\gamma}}J_{2+\hat\gamma}(2\sqrt 2\alpha) 
\nonumber\\
&=2\frac{2^{(1-\hat\gamma)/2}}{2\alpha^{\hat\gamma}}J_{1+\hat\gamma}(2\sqrt 2\alpha)+
\frac{2^{-\hat\gamma/2}}{2\alpha^{\hat\gamma}}J_{2+\hat\gamma}(2\sqrt 2\alpha)+\mathscr S(2). \label{S2}
\end{align}

\subsection{Evaluating the limiting fraction} Now, we go back to the question of determining
the limit in (\ref{limfrac}). First, the formula for $a_N$ in (\ref{a_N}) equals, by the 
result in (\ref{Li}), 
\begin{align*}
 a_N &\sim B_0R_0L_0 N^{\hat\gamma+1}N!+B_1R_0L_0(N-1)^{\hat\gamma+1}(N-1)! \\
& +B_0R_1L_1(N-1)^{\hat\gamma+2}(N-1)!+B_1R_1L_1(N-2)^{\hat\gamma+2}(N-2)! \\
& +B_0R_2L_2(N-2)^{\hat\gamma+3}(N-2)!+B_1R_2L_2(N-3)^{\hat\gamma+3}(N-3)! \\
&\sim N^{\hat\gamma+1}N!B_0(R_0L_0+R_1L_1+R_2L_2).
\end{align*} 
Recall, from preceding equation (\ref{F_i}), that $B_0=1$ and the $R_i$:s we want to use,
which we have denoted $\sR_i$ and $\tR_i$ for $\sa_N$ and $\ta_N$, respectively, are to 
be found in (\ref{allaR}).

Note also, from  (\ref{IoJ}), that
\begin{align*} 
I(0)+J(0) &=\int_0^1(1-x)^{-\lambda/(2+\lambda)}(x+1)e^{x/(2+\lambda)}\dd x \\
&= \left[-(2+\lambda)(1-x)^{1-\lambda/(2+\lambda)}e^{x/(2+\lambda)} \right]_{x=0}^{x=1}=2+\lambda=1/\alpha,
\end{align*}
and a straightforward property of $\mathscr S(x)$, as defined in (\ref{scrS}), is
\begin{equation}\label{scrSr}
 x\mathscr S(0)+y\mathscr S(1)+z\mathscr S(2)=(x+y+z)\mathscr S\left(\frac{y+2z}{x+y+z}\right).
\end{equation}
Then, from (\ref{allaR}), (\ref{Lif}), (\ref{S0}), (\ref{S1}), (\ref{S2}) and  (\ref{scrS}),
recalling also that $\alpha=1/(2+\lambda)$, we get
\begin{align*}
 &\frac{\sa_N}{N^{\hat\gamma+1}N!} \sim \sum_{i=0}^2 \sR_iL_i=(5+\lambda)L_0-7\alpha L_1+4\alpha^2 L_2 \\
&= \frac{e^{-2\alpha}[I(0)+J(0)]}{\alpha\Gamma(\hat\gamma+1)}+\frac1{\alpha^2}\big[(5+v) S(0)-7 S(1)+4 S(2)\big] \\
&=\frac{1}{\alpha^2}\bigg[\frac{e^{-2\alpha}}{\Gamma(\hat\gamma+1)}+
\frac{2^{(1-\hat\gamma)/2}}{2\alpha^{\hat\gamma}}J_{1+\hat\gamma}(2\sqrt 2\alpha)
+4\frac{2^{-\hat\gamma/2}}{2\alpha^{\hat\gamma}}J_{2+\hat\gamma}(2\sqrt 2\alpha) 
+\frac{1}{\alpha}\mathscr S(\alpha) \bigg],
\end{align*}
where we in the last equality used the result of (\ref{scrSr}). Similarly we get
\begin{align*}
 &\frac{\ta_N}{N^{\hat\gamma+1}N!} \sim \sum_{i=0}^2 \tR_iL_i=(1+\lambda)L_0+\alpha L_1+0 L_2 \\
&=\frac1{\alpha^2}\left[\frac{e^{-2\alpha}}{\Gamma(\hat\gamma+1)}+ 
\frac{2^{(1-\hat\gamma)/2}}{2\alpha^{\hat\gamma}}J_{1+\hat\gamma}(2\sqrt 2\alpha)+
\frac{1}{\alpha}\mathscr S(\alpha) \right].
\end{align*}
Next, we note that 
\begin{align}
 &\mathscr S(\alpha) =\sum_{j=0}^\infty (-2\alpha)^{j}(j+1+\alpha) 
\sum_{m=1}^\infty \frac{(-1)^m(2\alpha^2)^m}{\Gamma(m+1+\hat\gamma)\Gamma(m+1+j)} \nonumber \\
&= \underbrace{\sum_{j=0}^\infty (-2\alpha)^{j}(j+1+\alpha) 
\sum_{m=0}^\infty \frac{(-1)^m(2\alpha^2)^m}{\Gamma(m+1+\hat\gamma)\Gamma(m+1+j)}}_{=:\mathscr S_\lambda}
-\frac{(1-\alpha)e^{-2\alpha}}{\Gamma(\hat\gamma+1)}, \label{scr_}
\end{align}
where we think of the sum denoted $\mathscr S_\lambda$ as a function of the parameter $\lambda$. 

\begin{figure}[hbt]
\includegraphics[scale=0.4]{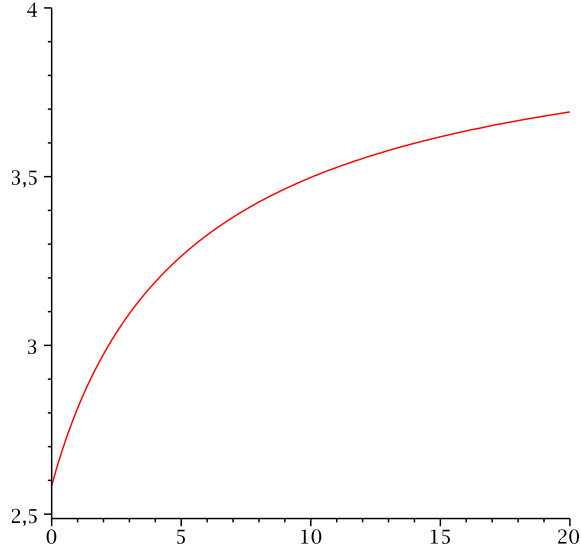}
\caption{The speed of percolation on the ladder with diagonals of (exponential) intensity $\lambda$ on the vertical edges 
(as measured in units of the intensity of the horizontal and diagonal edges), plotted by Maple.}
\label{diagspeed}
\end{figure}

Now, we have an expression for the limiting fraction 
\begin{equation}
 \pi_0=\lim_{n\to\infty}\frac{d_n}{c_n}=\frac
{ \frac{(2\alpha-1)e^{-2\alpha}}{\Gamma(\hat\gamma+1)}+
\frac 12(\sqrt 2\alpha)^{1-\hat\gamma}J_{1+\hat\gamma}
+\mathscr S_\lambda  }
{ \frac{(2\alpha-1)e^{-2\alpha}}{\Gamma(\hat\gamma+1)}+
\frac 12(\sqrt 2\alpha)^{1-\hat\gamma}\big[ J_{1+\hat\gamma}
+2\sqrt 2 J_{2+\hat\gamma}\big]
+\mathscr S_\lambda }, \label{lf}
\end{equation}
in which we have suppressed the argument $2\sqrt2\alpha$ from the Bessel function, i.e.\
in  (\ref{lf}) we should interpret
$J_\nu$ as $J_\nu(2\sqrt2\alpha)$. The speed 
\[ \speed=2+2\pi_0\]
is plotted in Figure \ref{diagspeed} for 
$\lambda$ ranging from 0 to 20. At zero the speed is 
$\frac{2\sqrt 2J_1(\sqrt 2)}{3J_1(\sqrt2)/\sqrt2-J_0(\sqrt 2)}\approx 2.58$ - this exact expression
is derived in  Section \ref{eq0} below - and as $\lambda$ increases to infinity, the speed 
must reach 4. 

In Figure \ref{exempel2} we have given numerical values when $\lambda$ is 0, 1 and 2, respectively.

\begin{figure}[hbt]
 \subfigure[]{\includegraphics[scale=0.40]{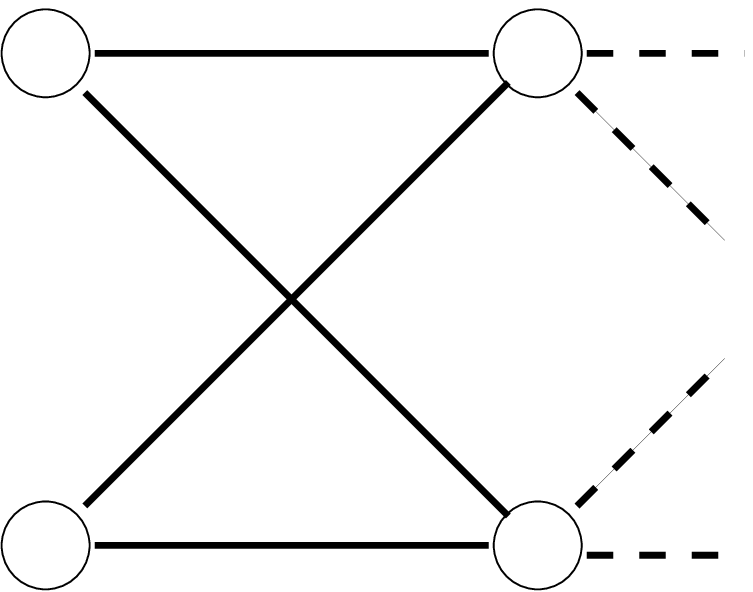}}
 \subfigure[]{\includegraphics[scale=0.40]{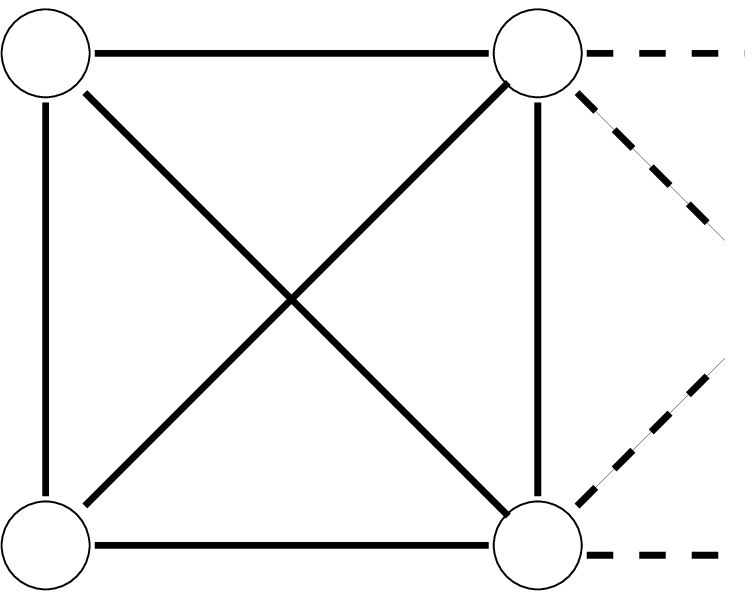}}
 \subfigure[]{\includegraphics[scale=0.40]{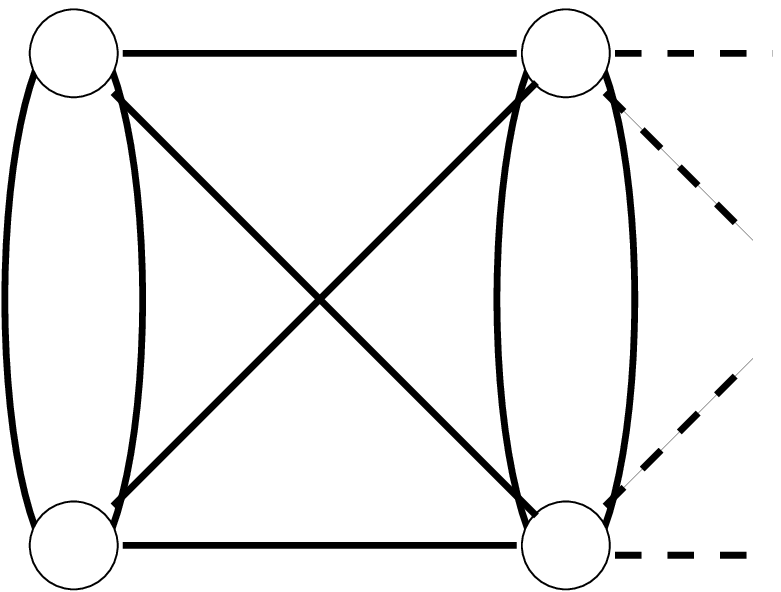}}
\caption{The speed of percolation when all edges are associated with random variables having intensity 1. Numerical
values to two digits are, from left to right, 2.58, 2.81 and 2.97. }
\label{exempel2}
\end{figure}

\subsection{The case $\lambda=0$} \label{eq0}
This special case has already been investigated in \cite{Sch09} by other methods. In this section 
we show that our method gives the same result, since this is far from obvious. 
Also, the case of $\lambda=0$ appears to be
the only case that allows for great simplification of the expression (\ref{lf}).
We note that
for $\lambda=0$, since this implies $\alpha=1/2$ and $\hat\gamma=0$, we get from the definition
of $\mathscr S_\lambda$ in (\ref{scr_}),
\begin{align}
 \mathscr S_0 &=
\sum_{j=0}^\infty (-1)^{j}\left(j+\frac32\right) 
\sum_{m=0}^\infty \frac{(-1)^m(1/2)^m}{m!(m+j)!} \nonumber \\
&=\sum_{j=0}^\infty (-1)^{j}\left(j+\frac32\right)(\sqrt 2)^jJ_j(\sqrt2) \nonumber \\
&=\frac 32\sum_{j=0}^\infty (-1)^{j}(\sqrt 2)^jJ_j(\sqrt2) +
\sum_{j=0}^\infty (-1)^{j}(\sqrt 2)^{j-1} \sqrt2jJ_j(\sqrt2) \label{S_0} \\
\intertext{and since $\sqrt2jJ_j(\sqrt2)=J_{j-1}(\sqrt2)+J_{j+1}(\sqrt2)$ by relation
(\ref{Brec}), we get, after cancellation of terms} \nonumber 
\mathscr S_0 &= \frac 12[J_0(\sqrt 2)-\sqrt2J_1(\sqrt 2)]. 
\end{align}
This, together with the fact that $J_2(\sqrt 2)$ that now appears in the 
denominator of (\ref{lf}), as a consequence of relation (\ref{Brec}), can be written as 
$\sqrt 2 J_1(\sqrt 2)-J_0(\sqrt 2)$, simplifies (\ref{lf}) to
\[ \frac{J_0(\sqrt 2)-J_1(\sqrt 2)/\sqrt 2}{-J_0(\sqrt 2)+3J_1(\sqrt 2)/\sqrt 2}, \]
so that 
\begin{align*}
 \speed &=2+2\pi_0=\frac{2\sqrt 2J_1(\sqrt 2)}{3J_1(\sqrt2)/\sqrt2-J_0(\sqrt 2)},
\end{align*}
and thus
\[ \rate=\frac 1\speed=\frac 34-\frac{J_0(\sqrt 2)}{2\sqrt2 J_1(\sqrt 2)} \]
which is what is calculated in \cite{Sch09}.

\section{Other graphs in $\LL$}
The class $\LL$ of graphs, defined in Section \ref{section2} and Figure \ref{LL}, contains 
(at least) 3 more graphs of interest; these are shown in Figure \ref{andra}. We will
not deal with these graphs here but make some remarks. We believe that the front process
together with the results  of \cite{Jan10}, used in Section \ref{diag}, can be used to calculate the speed
of percolation on the graph depicted in Figure \ref{c}, in principle with arbitrary
intensities associated with the vertical, diagonal and the two different horizontal
edges. There would be added complexity as the state space of the front process
would be all the integers, not only the non-negative ones. The speed when the intensities
are the same is $\frac{2\tan 1-1}{2\tan 1-2}\approx 1.90$, from the exact expression
of the rate of percolation calculated in \cite{Sch09}.
 
The problem when trying to apply the front process to the other graphs is that the Markov property
is lost. 

\begin{figure}[hbt]
 \subfigure[]{\includegraphics[scale=0.45]{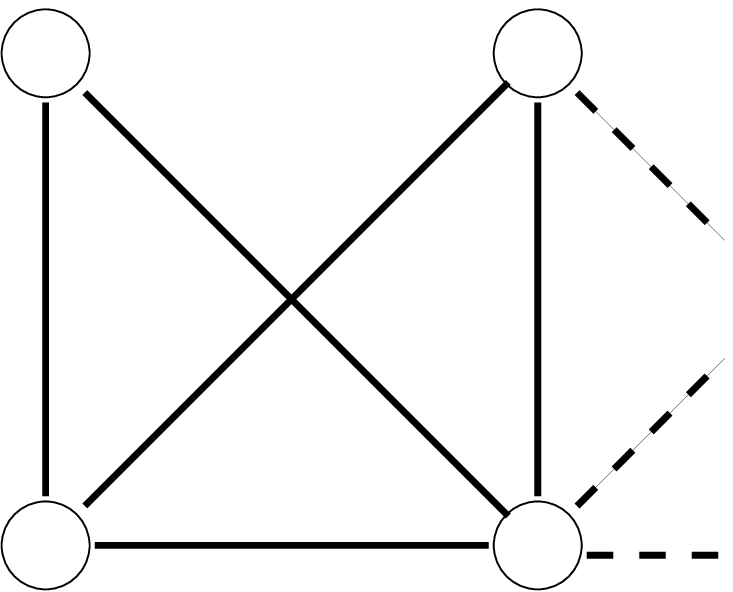}}
 \subfigure[]{\includegraphics[scale=0.45]{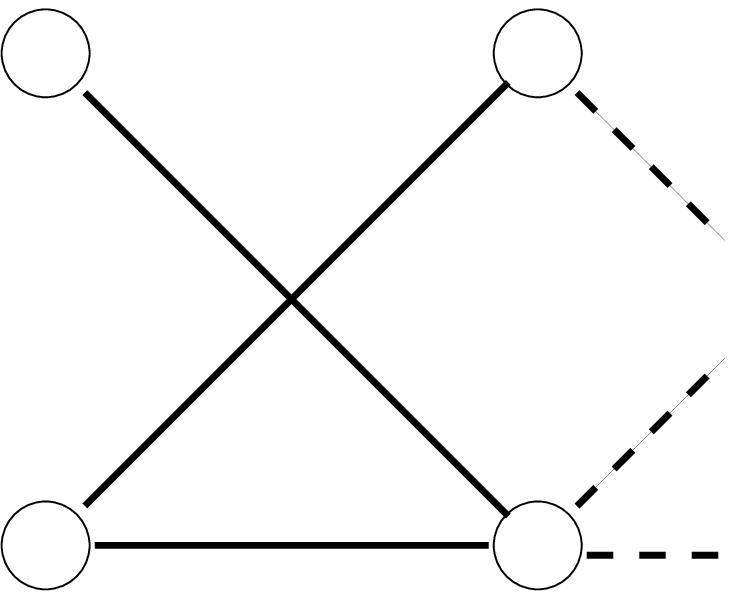}}
 \subfigure[]{\includegraphics[scale=0.45]{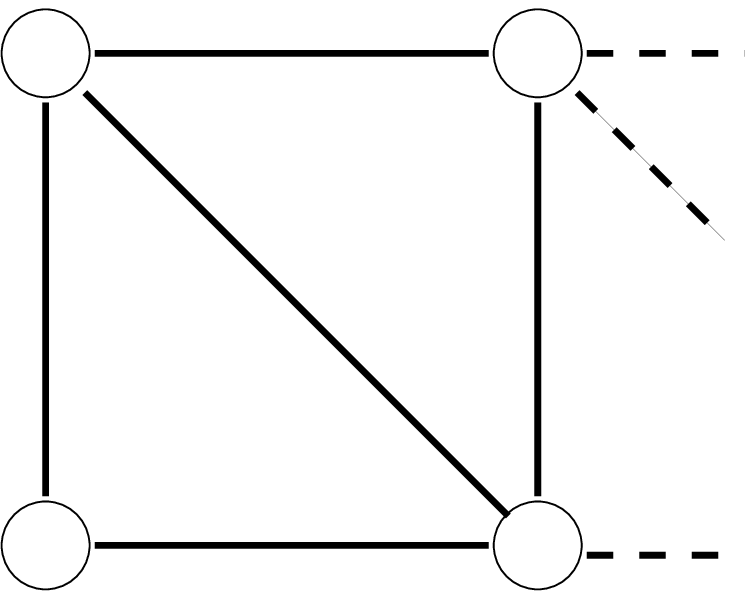} \label{c} }  
\caption{Three more graphs in $\LL$.  }
\label{andra}
\end{figure}

\appendix

\section{Proof of Claim \ref{theLi}} \label{pLi}
\noindent \textbf{Step 1.}
Let $\alpha$ and $K$ be nonnegative real numbers, $\alpha$ is considered fixed,
and define 
\begin{equation} \label{Adef}
\ett_m:=\ett_m(K)=\sum_{j=1}^m \binom mj\alpha^j(m+K-j)!. 
\end{equation}
Then we can write $\ett_m$ as
\[ \ett_m=(m+K)!\sum_{j=0}^m\frac{\alpha^j}{j!}\prod_{i=0}^{j-1}\frac{m-i}{m+K-i}. \]
We are interested in the limit of $\ett_m'=\ett_m/(m+K)!$ as $m$ tends to infinity.

Now, as $(m-i)/(m+K-i)$ is decreasing in $i$, 
\[ \prod_{i=0}^{j-1}\frac{m-i}{m+K-i}\leq \left(\frac{m}{m+K}\right)^j, \]
and as a consequence, we get 
\begin{equation}
\label{Aupp} \ett_m'\leq e^{\alpha m/(m+K))}.
 \end{equation} 

Next, as long as $j<m$,  since $\ln[(m-i)/(m+K-i)]$ is a negative and decreasing function
when $0\leq i<m$, we can calculate,
\begin{align}
 \prod_{i=0}^{j-1}&\frac{m-i}{m+K-i} = 
\exp\left\{\sum_{i=0}^{j-1}\ln\left[\frac{m-i}{m+K-i} \right] \right\} 
\geq \exp\left\{\int_0^j \ln\left[\frac{m-x}{m+K-x}\right] \right\} \nonumber \\
&=\frac{m^m}{(m+K)^{m+K}}\frac{(m+K-j)^{m+K-j}}{(m-j)^{m-i}}= \nonumber \\
&=\left(\frac{m}{m+K}\right)^j
\underbrace{\frac{ \left(1-\frac{j}{m+K}\right)^{m+K} }  {\left(1-\frac{j}{m}\right)^{m}}}_{\geq 1}
\left(\frac{ 1-\frac{j}{m} }{1-\frac{j}{m+K}}\right)^j. \label{3fac}
\end{align}
Since
\[ \frac{ 1-\frac{j}{m} }{1-\frac{j}{m+K}}=1-\frac{jK}{m(m+K-i)},\]
the last factor in (\ref{3fac}) is, by Taylor expansion around 1, 
\[ 1-\frac{j^2K}{m(m+K-j)}+\bigo\left(\left[\frac{K}{m(m+K-j)}\right]^2\right), \]
and hence  $\ett_m'$ is bounded below by 
\begin{equation}\label{Aned}
 \sum_{j=1}^{m-1}\frac{\left[m\alpha/(m+K)\right]^j}{j!}(1+\bigo(1/m))=e^{\alpha m/(m+K)}(1+\bigo(1/m)).
\end{equation}
In conclusion; from the upper and lower bounds, (\ref{Aupp}) and (\ref{Aned}) respectively,
 \begin{equation} \label{Aet}
  \ett_m=(m+K)!\exp\left\{\frac{\alpha m}{m+K}\right\}(1+\bigo(1/m)).
 \end{equation}

\noindent \textbf{Step 2.}
Let us examine, for $l\geq 0$ and $K>-1$, the fraction $\Gamma(l+K+1)/\Gamma(l+1)$ for
large $l$. By Stirling's formula
\begin{align}
 \frac{\Gamma(l+K+1)}{\Gamma(l+1)} &=
\frac{\sqrt{2\pi}(l+K)^{l+K+1/2}e^{-l-K}}{\sqrt{2\pi}l^{l+1/2}e^{-l}}(1+\bigo([l+K]^{-1})) \nonumber \\
&=(l+K)^{l+K}e^{-K}\left(1+\frac{K}{l}\right)^{l+1/2}(1+\bigo([l+K]^{-1})) \nonumber \\
&=(l+K)^{l+K}(1+\bigo(1/l)) \label{Gkvot}
\end{align}

\noindent \textbf{Step 3.}
Define the function
\begin{equation*}
 \tva_i(k)= 
\sum_{l+m=M-2k}\frac{\Gamma(k+i+\hat\gamma+l+1)}{\Gamma(l+1)}(m+1)\ett_m(k+l), \;\;k\leq M/2,
\end{equation*}
where $\ett_m(k+l)$ is defined in  (\ref{Adef}). 
With $\tva_i(k)$  defined in this way we can
write $\hat F_i(M)$ from (\ref{hatF})  as
\[ \hat F_i(M)=\sum_{k\leq M/2} \frac{(-C)^k}{k!\Gamma(k+i+\hat\gamma+1)} \tva_i(k). \]
Now, examine $\tva_i(k)$ scaled by $M^{k+\hat\gamma+i+1}(M-k)!$,
\begin{align*}
 \frac{\tva_i(k)}{M^{k+\hat\gamma+i+1}(M-k)!}&=
\sum_{l+m=M-2k}
\underbrace{\frac{\Gamma(k+i+\hat\gamma+l+1)}{\Gamma(l+1)M^{k+\hat\gamma+i}}}_{a}\cdot
\underbrace{\frac{m+1}{M}}_{b}\cdot
\underbrace{\frac{\ett_m(k+l)}{(M-k)!}}_{c}.
\end{align*}
From (\ref{Gkvot}) we get, using $l+m=M-2k$,
\begin{align*}
 a&= \left(\frac{\hat\gamma+i-k}{M}+ 1-\frac mM \right)^{k+i+\hat\gamma}(1+\bigo(1/l)) \\
&=\left( 1-\frac mM \right)^{k+i+\hat\gamma}(1+\bigo(1/l)).
\end{align*}
Obviously $b=m/M+1/M$. From (\ref{Aet}) we get 
\begin{align*}
 c&=\exp\left\{\frac{\alpha m}{M-k}\right\}(1+\bigo(1/m))=
\exp\left\{\frac{\alpha m}{M}\right\}(1+\bigo(1/m)).
\end{align*}
So that, as $M\to\infty$,
\begin{align*} 
 \frac{\tva_i(k)}{M^{k+\hat\gamma+i+1}(M-k)!}& \sim
 \sum_{m=0}^{M} \left(1-\frac mM\right)^{k+i+\hat\gamma}\frac mM\exp\left\{\alpha \frac mM\right\} \\
&\sim \int_0^1(1-x)^{k+i+\hat\gamma}xe^{\alpha x} \dd x.
\end{align*}
And hence, since also $M^{k+\hat\gamma+i+1}(M-k)!\sim M^{\hat\gamma+i+1}M!$, we have shown
the result in (\ref{Li}).

\section{Proof of Claim \ref{theIn} } \label{pIn}
Recall that we have defined, for  $n\in\mathbb N$,
\begin{align} 
 I(n) =\int_0^1(1-x)^{\hat\gamma+n}xe^{\alpha x}\dd x \quad\mbox{and} \quad
 J(n) =\int_0^1(1-x)^{\hat\gamma+n}e^{\alpha x}\dd x,
\end{align}
where $\hat\gamma\in(-1,0]$ and $\alpha\in(0,1/2]$, although 
in this section it is only essential that $\hat\gamma>-1$.

Then $I(0)<\infty$ and $J(0)<\infty$, since $\hat\gamma>-1$. For $n\geq 1$,
partial integration reveals the recursive relationship
\begin{align*}
 I(n)&=\left[(1-x)^{\hat\gamma+n}x\frac1{\alpha}e^{\alpha x}\right]_{x=0}^{x=1} \\ 
&\phantom{=} -\frac 1\alpha\int_0^1[-(\hat\gamma+n)(1-x)^{\hat\gamma+n-1}x+(1-x)^{\hat\gamma+n}]e^{\alpha x}\dd x \\
&=\frac 1\alpha I(n-1)-\frac1{\alpha}J(n),
\end{align*}
from which follows by iteration that
\begin{equation}
 I(n)= \frac{(\hat\gamma+n)^{\ff n}}{\alpha^n}I(0)
-\frac1\alpha\sum_{j=1}^{n}\frac{(\hat\gamma+n)^{\ff{n-j}}}{\alpha^{n-j}}J(j). \label{Irec}
\end{equation}
Note that the above formula holds for any $n\geq 0$ if we interpret sums of the
form $\sum_n^{m}$ to be zero if $m<n$. 

Next,  we turn our attention to $J(n)$. For $n\geq 1$ we get, again by partial integration,
\begin{align*}
 J(n) &=\left[(1-x)^{\hat\gamma+n}\frac1{\alpha}e^{\alpha x}\right]_{x=0}^{x=1}
+\frac{\hat\gamma+n}\alpha\int_0^1[(1-x)^{\hat\gamma+n-1}e^{\alpha x}\dd x \\
&=\frac{\hat\gamma+n}{\alpha}J(n-1)-\frac1\alpha,
\end{align*}
from which follows by iteration that
\begin{equation}\label{Jform}
 J(n)= \frac{(\hat\gamma+n)^{\ff n}}{\alpha^n}J(0)-\frac 1\alpha\sum_{k=0}^{n-1}\frac{(\hat\gamma+n)^{\ff k}}{\alpha^k}.
\end{equation}
Using (\ref{Jform}) in (\ref{Irec}) yields
\begin{align*}
 I(n)&=\frac{(\hat\gamma+n)^{\ff n}}{\alpha^{n}}I(0)-\frac{J(0)}{\alpha^{n+1}}S_1
+ \frac{1}{\alpha^2}S_2, 
\end{align*}
where
\begin{align*}
S_1 &= \sum_{j=1}^{n}(\hat\gamma+n)^{\ff {n-j}}(\hat\gamma+j)^{\ff{j}}=\frac{n\Gamma(\hat\gamma+n+1)}{\Gamma(\hat\gamma+1)}, \quad\mbox{and} \\
S_2 &= \sum_{j=1}^{n}\sum_{k=0}^{j-1}\frac{(\hat\gamma+n)^{\ff {n-j}}(\hat\gamma+j)^{\ff{k}}}{\alpha^{k-j}}.
\end{align*}
The above simplification of $S_1$ is easily seen to be true 
by the relation $x^{\ff n}=\Gamma(x+1)/\Gamma(x+1-m)$. The same relation can be applied to
$S_2$ to give
\begin{align*}
 S_2&=\frac{\Gamma(\hat\gamma+n+1)}{\alpha^n}\sum_{j=1}^{n}\sum_{k=0}^{j-1}\frac{\alpha^{j-k}}{\Gamma(\hat\gamma+1+j-k)} \\
&=\frac{\Gamma(\hat\gamma+n+1)}{\alpha^n}\sum_{m=1}^{n}\frac{(n+1-m)\alpha^{m}}{\Gamma(\hat\gamma+1+m)},
\end{align*}
where the last equality follows from noting that $k$ and $j$ only appear in the summation as $m=j-k$, and 
that the values of $m$, i.e.\ 1, 2, $\ldots$, $n$, appear $n$, $n-1$, $\ldots$, $1$ times, respectively. 

So, we have the sought formula
\begin{align}
 I(n) &=\frac{\Gamma(\hat\gamma+n+1)}{\alpha^n\Gamma(\hat\gamma+1)}\bigg[I(0)-\frac{n}{\alpha}J(0)\bigg] \nonumber \\
&\phantom{=} +\frac{\Gamma(\hat\gamma+n+1)}{\alpha^{n+2}}\sum_{m=1}^{n}\frac{(n+1-m)\alpha^{m}}{\Gamma(\hat\gamma+1+m)} \nonumber \\
&=\frac{\Gamma(\hat\gamma+n+1)}{\alpha^n}\bigg[ \frac{I(0)-\frac{n}{\alpha}J(0)}{\Gamma(\hat\gamma+1)}+
\frac1{\alpha^2}\sum_{m=1}^{n}\frac{(n+1-m)\alpha^{m}}{\Gamma(\hat\gamma+1+m)}\bigg]. \label{Jf}
\end{align}

\end{document}